\patchcmd\longtable{\par}{\if@noskipsec\mbox{}\fi\par}{}{}
\newcommand*{\zed}{{\ensuremath{\mathbb{Z}}}}
\newcommand*{\eff}{{\ensuremath{\mathbb{F}}}} 
\newcommand*{\Aut}{\ensuremath{\textsc{aut}}}
\newcommand*{\GCD}{\ensuremath{\textsc{gcd}}}
\newcommand{\RAR}[2]{\rule[-.1in]{0in}{.4in}\xrightarrow{\makebox[#1][c]{$\textstyle #2 $}}}
\theoremstyle{definition}
 \newtheorem{construction}{Construction}[section]
 \newtheorem{definition}[construction]{Definition}
 \newtheorem{remark}[construction]{Remark}
 \newtheorem{example}[construction]{Example}
 \newtheorem*{construction*}{Construction}
 \newtheorem*{definition*}{Definition}
 \newtheorem*{remark*}{Remark}
 \newtheorem*{example*}{Example}
 \newtheorem*{exercise*}{Exercise}
 \newtheorem*{observation*}{Observation}
\theoremstyle{plain}
 \newtheorem{theorem}[construction]{Theorem}
 \newtheorem{lemma}[construction]{Lemma}
 \newtheorem{corollary}[construction]{Corollary}
 \newtheorem*{fact*}{Fact}
 \newtheorem*{theorem*}{Theorem}
 \newtheorem*{lemma*}{Lemma}
 \newtheorem*{claim*}{Claim}
 \newtheorem*{proposition*}{Proposition}
 \newtheorem*{corollary*}{Corollary}
 \newtheorem*{question*}{Question}
\numberwithin{figure}{section}
\numberwithin{table}{section}
\title{Near-factorizations of dihedral groups}
\author[1]{Donald L.\ Kreher}
\affil[1]{Department of Mathematical Sciences\\
Michigan Technological University\
Houghton, MI 49931-1295, U.S.A.}
\author[2]{Maura B.\ Paterson}
\affil[2]{School of Computing and Mathematical Sciences, Birkbeck, University of London, Malet St, London WC1E 7HX, UK}
\author[3]{Douglas R.\ Stinson\thanks{D.R.\ Stinson's research is supported by  NSERC discovery grant RGPIN-03882.}}
\affil[3]{David R.\ Cheriton School of Computer Science\\University of Waterloo\\ Waterloo ON, N2L 3G1\\Canada}
\begin{document}
\maketitle

\begin{abstract}
We investigate near-factorizations of nonabelian groups, concentrating on dihedral groups.
We show that some known constructions of near-factorizations in dihedral groups yield equivalent near-factorizations. In fact, there are very few known examples of nonequivalent near-factorizations in dihedral or other nonabelian groups; we provide some new examples with the aid of the computer. We also analyse a construction for near-factorizations in dihedral groups from near-factorizations in cyclic groups, due to P\^{e}cher, and we investigate when nonequivalent near-factorizations can be obtained by this method. 
\end{abstract}

\section{Introduction}
\label{intro.sec}

Let $(G,\cdot)$ be a finite multiplicative group with identity $e$. For $A, B \subseteq G$, define $AB = \{ gh: g \in A, h \in B\}$. We say that $(A,B)$ is a \emph{near-factorization} of $G$
if $|A| \times |B| = |G|-1$ and $G-e =AB$. In the case where we have an additive group $(G,+)$ with identity $0$, the second condition becomes $G-0 =A+B$. Further, $(A,B)$ 
is a \emph{$(k,\ell)$-near-factorization} of $G$ if $|A|=k$ and $|B| =\ell$, which requires 
$k\ell = |G| -1$. 
There is always a \emph{trivial} $(1,|G|-1)$-near-factorization of $G$ given by $A = \{e\}$, $B = G - e$. 
A near-factorization with $|A| > 1$ and $|B| > 1$ is \emph{nontrivial}.

Near-factorizations apparently first appeared in an article by de Bruijn \cite{deB} in 1956, under the name ``degenerated British number systems.'' The term ``near-factorization'' first appears in the 1990 paper
by de Caen {\it et al.} \cite{CGHK}. The paper  \cite{CGHK} includes several interesting results, including the first construction for near-factorizations of dihedral groups.
Other papers on near-factorizations relevant to our work include \cite{BHS} and \cite{Pech}.

\begin{example}
\label{zed15.ex} A $(3,5)$-near-factorization of $\zed_{16}$ is given by
$A = \{ 0,1,15\}$ and $B = \{ 2,5,8,11,14\}$. 
\end{example}

A subset $X$ of a multiplicative group $(G,\cdot)$ is \emph{symmetric} if $X = X^{-1}$, where
$X^{-1} = \{ x^{-1} : x \in X\}$. A subset $X$ of an additive group $(G,+)$ is \emph{symmetric} if $X = -X$, where
$-X = \{ -x : x \in X\}$. A near-factorization $(A,B)$ is \emph{symmetric} if $A$ and $B$ are both 
symmetric. We observe that the near-factorization given in Example \ref{zed15.ex} is symmetric.

We now define the notion of equivalence of near-factorizations.
Suppose $(A,B)$ is a near-factorization of a multiplicative group $(G, \cdot)$, $f \in \Aut(G)$ and $h \in G$. Then
\[
(f(A)h)\,(h^{-1}f(B)) =  f(A)f(B)=f(AB)=f(G-e)=f(G)-f(e)=G-e.
\]
Therefore $(f(A)h, h^{-1}f(B))$ is also a near-factorization of $G$ and we say that
these two
near-factorizations  
of $G$ are \emph{equivalent}.
That is, two near-factorizations $(A,B)$ and $(A',B')$ of $G$ are {equivalent} if
\[
\big(A',B'\big) = \big(f(A)h,h^{-1}f(B)\big)
\]
for some $f \in \Aut(G)$ and $h \in G$.
We also define the \emph{canonical form} of a near-factorization $(A,B)$ to be the minimum equivalent representation of $(A,B)$ under the lexicographic ordering.

In the case of an additive group $(G,+)$, two near-factorizations $(A,B)$ and $(A',B')$ of $G$ are {equivalent} if
\[
\big(A',B'\big) = \big(f(A)+h,(-h) + f(B)\big).
\]

The automorphism group of $(\zed_n,+)$ consists of all mappings
$x \mapsto ax$, where $a \in \zed_n^*$ (as usual, $\zed_n^*$ denotes the group of units modulo $n$). Therefore, if $(A,B)$ and $(A',B')$ are equivalent near-factorizations of
$(\zed_n,+)$, then $A' = aA + h$ and $B' = aB - h$, where $a \in \zed_n^*$, $h \in \zed_n$. 

\begin{example}
The $(3,5)$-near-factorization of $\zed_{16}$  given by
$A = \{ 0,1,15\}$ and $B = \{ 2,5,8,11,14\}$ is equivalent to the 
near-factorization $A' = 7A + 2 = \{ 2,9,11\}$ and $B' = 7B - 2 = \{ 0,1,6,11,12\}$.
\end{example} 

We define the \emph{equivalence map} $\Phi_{f,h}$ on the set of near-factorizations of a multiplicative group, say 
$\{ (A,B) : G\setminus\{e\}=AB \}$, by
\[
\Phi_{f,h}: (A,B)\mapsto \big(f(A)h,h^{-1}f(B)\big).
\]
The following lemma is straightforward.
\begin{lemma}\label{Phi Inverse}
$\Phi_{f,h}^{-1} = \Phi_{f^{-1}, f^{-1}(h^{-1}) }.$
\end{lemma}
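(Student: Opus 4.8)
The plan is to verify directly that $\Phi_{f^{-1},\,f^{-1}(h^{-1})}$ is a left inverse of $\Phi_{f,h}$ on the (finite) set of near-factorizations of $G$, and then to upgrade this to a genuine two-sided inverse. Before computing, I would record the only facts that are needed: for an automorphism $f \in \Aut(G)$ and subsets $X,Y \subseteq G$ one has $f(XY) = f(X)f(Y)$, and for a single element $h \in G$, $f(\{h\}X) = \{f(h)\}\,f(X)$ and $f(X\{h\}) = f(X)\{f(h)\}$; moreover $f^{-1} \in \Aut(G)$ with $f^{-1}\circ f = \mathrm{id}_G$. The one small identity to get right is $\bigl(f^{-1}(h^{-1})\bigr)^{-1} = f^{-1}\bigl((h^{-1})^{-1}\bigr) = f^{-1}(h)$.

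Next I would carry out the substitution. Writing $g = f^{-1}$ and $k = f^{-1}(h^{-1})$, applying $\Phi_{f,h}$ to $(A,B)$ gives $\bigl(f(A)h,\ h^{-1}f(B)\bigr)$, and then applying $\Phi_{g,k}$ yields, in the first coordinate,
\[
g\bigl(f(A)h\bigr)\,k = f^{-1}(f(A))\,f^{-1}(h)\,f^{-1}(h^{-1}) = A\,f^{-1}(hh^{-1}) = A\,f^{-1}(e) = A ,
\]
and in the second coordinate, using $k^{-1} = f^{-1}(h)$,
\[
k^{-1}\,g\bigl(h^{-1}f(B)\bigr) = f^{-1}(h)\,f^{-1}(h^{-1})\,f^{-1}(f(B)) = f^{-1}(hh^{-1})\,B = B .
\]
Hence $\Phi_{g,k}\circ\Phi_{f,h} = \mathrm{id}$.

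Finally, since $G$ is finite the set $\{(A,B): G\setminus\{e\}=AB\}$ is finite, and each $\Phi_{f,h}$ is a self-map of this set (as was observed just before the lemma), so a left inverse is automatically a two-sided inverse; thus $\Phi_{f,h}^{-1} = \Phi_{f^{-1},\,f^{-1}(h^{-1})}$. If one prefers to avoid the finiteness argument, one can instead compute $\Phi_{f,h}\circ\Phi_{g,k}$ directly in exactly the same way, or apply the left-inverse identity already proved with $f$ replaced by $f^{-1}$ and $h$ replaced by $f^{-1}(h^{-1})$ and simplify. There is no real obstacle here: the proof is a short formal calculation, and the only point requiring attention is keeping track of the inverses of images under $f$ and $f^{-1}$, together with the fact that $f$ (being a homomorphism) commutes appropriately with left and right translation.
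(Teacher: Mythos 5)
Your proof is correct and follows essentially the same route as the paper: the paper solves $A = f^{-1}(A')\,f^{-1}(h^{-1})$ and $B = \bigl(f^{-1}(h^{-1})\bigr)^{-1} f^{-1}(B')$ from $A' = f(A)h$, $B' = h^{-1}f(B)$, which is exactly your computation that $\Phi_{f^{-1},\,f^{-1}(h^{-1})}\circ\Phi_{f,h} = \mathrm{id}$, hinging on the same identity $\bigl(f^{-1}(h^{-1})\bigr)^{-1} = f^{-1}(h)$. Your extra remark upgrading the left inverse to a two-sided inverse is a small additional care that the paper leaves implicit.
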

\begin{proof}
Denote
\begin{align*}
A' &=  f(A)h\\
B' &= h^{-1}f(B).
\end{align*}
Then
\begin{align*}
A &= f^{-1}(A'h^{-1})\\
&= f^{-1}(A) f^{-1}(h^{-1})\\
B &= f^{-1}(hB')\\
&= f^{-1}(h) f^{-1}(B')\\
&= (f^{-1}(h^{-1}))^{-1} f^{-1}(B'),
\end{align*}
since $(f^{-1}(h^{-1}))^{-1} = f^{-1}(h)$.
The result follows.
\end{proof}

The following theorem is useful. We state it in the setting of additive groups.

\begin{theorem}
\cite[Proposition 2]{CGHK}
\label{translation.thm}
Suppose $(A,B)$ is a {near-factorization} of the abelian group $(G,+)$. Then there is an element $g \in G$ such that $(A+g,B-g)$ is a symmetric near-factorization of  $(G,+)$. 
\end{theorem}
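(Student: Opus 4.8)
The plan is to pass to the integral group ring $\mathbb{Z}[G]$ and show that $-A$ is a translate of $A$, that $-B$ is a translate of $B$, and that a single shift symmetrizes both. For $X\subseteq G$ write $\widehat X=\sum_{x\in X}x\in\mathbb{Z}[G]$, put $\widehat G=\sum_{g\in G}g$, let $1\in\mathbb{Z}[G]$ be the ring identity (the basis element of $0\in G$), and let $\sigma$ be the ring automorphism of $\mathbb{Z}[G]$ induced by $g\mapsto -g$, so $\sigma(\widehat X)=\widehat{-X}$. Because $|A|\,|B|=|G|-1=|G\setminus\{0\}|$ and $A+B=G\setminus\{0\}$ as sets, the map $(a,b)\mapsto a+b$ from $A\times B$ to $G$ is injective, so the near-factorization hypothesis is the identity $\widehat A\,\widehat B=\widehat G-1$ in $\mathbb{Z}[G]$; applying $\sigma$ gives $\sigma(\widehat A)\,\sigma(\widehat B)=\widehat G-1$ as well. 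Extending scalars to $\mathbb{Q}$ and evaluating at a character $\chi$ of $G$, one gets $\chi(\widehat A)\chi(\widehat B)=-1$ when $\chi$ is nontrivial and $\chi(\widehat A)\chi(\widehat B)=|A|\,|B|$ when $\chi$ is trivial; in particular $\chi(\widehat A)$ and $\chi(\widehat B)$ never vanish, so $\widehat A,\widehat B,\sigma(\widehat A),\sigma(\widehat B)$ are all units of $\mathbb{Q}[G]$.

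The heart of the argument is the unit $\rho:=\widehat A\,\sigma(\widehat A)^{-1}\in\mathbb{Q}[G]$. Since $\widehat A\,\widehat B=\sigma(\widehat A)\,\sigma(\widehat B)$, we also have $\rho=\sigma(\widehat B)\,\widehat B^{-1}$, and $\sigma(\rho)=\sigma(\widehat A)\,\widehat A^{-1}=\rho^{-1}$. First I would check $\rho\in\mathbb{Z}[G]$: the augmentation of $\rho$ equals $|A|/|A|=1$, so $\rho\,\widehat G=\widehat G$, and therefore $\widehat A\,\sigma(\widehat B)=\rho\,\widehat A\,\widehat B=\rho(\widehat G-1)=\widehat G-\rho$, i.e.\ $\rho=\widehat G-\widehat A\,\sigma(\widehat B)\in\mathbb{Z}[G]$. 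Writing $\rho=\sum_{g\in G}c_g\,g$ with $c_g\in\mathbb{Z}$ and comparing the coefficient of the identity element on the two sides of $\rho\,\sigma(\rho)=1$ gives $\sum_g c_g^2=1$; hence exactly one $c_g$ is $\pm 1$ and the rest are $0$, and since $\rho$ has augmentation $1$ we conclude $\rho=g$ for a unique $g\in G$. Unravelling, $\widehat A=g\cdot\sigma(\widehat A)$ and $\sigma(\widehat B)=g\cdot\widehat B$, i.e.\ $-A=A-g$ and $-B=B+g$.

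It remains to choose $h\in G$ with $2h=-g$; for such an $h$ one computes $-(A+h)=-A-h=(A-g)-h=A+h$ (because $-g-h=2h-h=h$) and, likewise, $-(B-h)=-B+h=(B+g)+h=B-h$ (because $g+h=-2h+h=-h$), so $(A+h,B-h)$ is symmetric, and it is a near-factorization since it is equivalent to $(A,B)$ by a translation. Thus the whole proof reduces to showing $g\in 2G$, and this is the step I expect to need the most care. Summing the relation $-A=A-g$ over the $|A|$ elements of $A$ yields $|A|\,g=2\sum_{a\in A}a\in 2G$; if $|A|$ is odd this forces $g\in 2G$ (reduce modulo $2G$, a group of exponent $2$, in which multiplication by an odd integer acts as the identity), and symmetrically if $|B|$ is odd. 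The only remaining case is $|A|$ and $|B|$ both even, but then $4\mid|A|\,|B|=|G|-1$, so $|G|$ is odd, $2G=G$, and $g\in 2G$ trivially. The group-ring computation that produces $\rho=g$ is the technical core, but it is essentially forced once the right ratio is written down; the parity case-analysis is the one place where, a priori, something could go wrong.
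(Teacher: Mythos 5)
The paper does not prove this statement---it is quoted from de Caen \emph{et al.}\ \cite[Proposition~2]{CGHK}---so there is no in-paper argument to compare against; your proof is correct and self-contained, and it follows essentially the classical group-ring route of the original source. Every step checks out: the counting argument gives $\widehat A\,\widehat B=\widehat G-1$ exactly (not just as sets), the ratio $\rho=\widehat A\,\sigma(\widehat A)^{-1}$ is integral via $\rho=\widehat G-\widehat A\,\sigma(\widehat B)$, the coefficient-of-identity computation in $\rho\,\sigma(\rho)=1$ together with augmentation $1$ forces $\rho=g\in G$, and the final parity analysis (one of $|A|,|B|$ odd, or else $4\mid |G|-1$ so $2G=G$) correctly produces $h$ with $2h=-g$. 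The only cosmetic point is that the characters used to certify that $\widehat A,\widehat B$ are units live in $\mathbb{C}[G]$ rather than $\mathbb{Q}[G]$; this is harmless, since an element of $\mathbb{Q}[G]$ invertible in $\mathbb{C}[G]$ is already invertible in $\mathbb{Q}[G]$.
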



\medskip

In this paper, we  study near-factorizations of nonabelian groups, concentrating on dihedral groups. In the rest of Section \ref{intro.sec}, we review some connections between generalized strong external difference families and near-factorizations, we summarize some basic properties of dihedral groups and near-factorizations in dihedral groups.
In Section \ref{32.sec}, we revisit two known constructions of near-factorizations of dihedral groups and show that they yield equivalent near-factorizations. This contradicts statements made in \cite{BHS}. In Section \ref{pech.sec}, we study a method due to P\^{e}cher that constructs a strongly symmetric near-factorization of the dihedral group $D_n$ (for the definition of ``strongly symmetric,'' see Section \ref{properties.sec}) from a symmetric near-factorization of the cyclic group $\zed_{2n}$ when $n$ is odd. We show that equivalent near-factorizations are mapped to equivalent near-factorizations by this construction. We also prove a converse assertion, provided that a certain numerical condition is satisfied.
In Section \ref{nonabelian.sec}, we show the existence of nonequivalent near-factorizations in two dihedral groups and two nonabelian groups that are not dihedral groups.

\subsection{Near-factorizations and SEDFs}

Near-factorizations are closely related to \emph{strong external difference families} and \emph{generalized strong external difference families}, which have been the topic of several recent papers. We recall two standard definitions in the setting of an additive group.
For two disjoint subsets $A,B$ of an additive  group $G$, define the multiset $\mathcal{D}(B,A)$ as follows:
\[ \mathcal{D}(B,A) = \{ y - x: y \in B, x \in A \}.  \]

\begin{definition}[Strong external difference family \cite{PS16}]
Let $G$ be an additive  group of order $n$. Suppose $m \geq 2$. 
An {\em $(n, m, \ell; \lambda)$-strong external difference family} (or {\em $(n, m, \ell; \lambda)$-SEDF}) is a set of $m$ disjoint $\ell$-subsets of $G$, say $\mathcal{A} = (A_0,\dots,A_{m-1})$, such that  the following multiset equation holds for each $i\in\{0,1,\dotsc,m-1\}$:
\[
\bigcup_{j\neq i} \mathcal{D}(A_i, A_j) = \lambda (G \setminus \{0\}).
\]
\end{definition}

\begin{definition}[Generalized strong external difference family \cite{PS16}]
Let $G$ be an additive abelian group of order $n$.
An \textit{$(n,m;\ell_1, \dots , \ell_m;\lambda_1, \dots , \lambda_m)$-generalized strong external difference family} 
(or \textit{$(n,m;\ell_1, \dots , \ell_m;\lambda_1, \dots , \lambda_m)$-GSEDF})
is a set of $m$ disjoint subsets of $G$, say $A_1, \dots , A_m$, such that
$|A_i| = \ell_i$ for $1 \leq i \leq m$ and 
the following multiset equation holds for every $i$, $1 \leq i \leq m$:
\[ \bigcup _{\{ j : j \neq i\} }^{} \mathcal{D}(A_i,A_j)   = \lambda_i (G \setminus \{0\}).\]
\end{definition}

It is obvious that an $(n,m,\ell,\lambda)$-SEDF is an 
$(n,m;\ell, \dots , \ell;\lambda, \dots , \lambda)$-GSEDF.
Also, in a GSEDF having $m=2$ sets, it must be the case that $\lambda_1 = \lambda_2$.

The following  simple lemma relates GSEDFs to near-factorizations of groups.

\begin{lemma}
\label{GSEDF-NF}
\cite[Lemma 3.7]{PS24}
Let $(G,+)$ be an additive group with $|G| = n$. Then there exists a $(k,\ell)$-near-factorization of $G$ if and only if
there exists an $(n,2;k,\ell;1,1)$-GSEDF in $G$.
\end{lemma}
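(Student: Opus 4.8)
The plan is to prove both directions of the biconditional by relating the defining conditions directly, since for $m=2$ the GSEDF equations become particularly transparent. First I would unpack what an $(n,2;k,\ell;1,1)$-GSEDF actually asks for: a pair of disjoint subsets $A_1, A_2$ with $|A_1| = k$, $|A_2| = \ell$, such that $\mathcal{D}(A_1,A_2) = G \setminus \{0\}$ (the $i=1$ equation, since the only $j \neq 1$ is $j=2$) and $\mathcal{D}(A_2,A_1) = G \setminus \{0\}$ (the $i=2$ equation). Now $\mathcal{D}(A_1,A_2) = \{a - b : a \in A_1, b \in A_2\}$, so as a multiset this has $k\ell$ elements; requiring it to equal $G \setminus \{0\}$ as a multiset forces $k\ell = n-1$ and says every nonzero element is hit exactly once. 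Observe that $\mathcal{D}(A_2,A_1) = -\mathcal{D}(A_1,A_2)$, so the second multiset equation is automatically equivalent to the first: $-(G\setminus\{0\}) = G \setminus \{0\}$. Hence an $(n,2;k,\ell;1,1)$-GSEDF is the same thing as a pair of disjoint $k$- and $\ell$-subsets $A_1, A_2$ with $A_1 - A_2 = G \setminus \{0\}$ (each element with multiplicity one).

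Next I would connect this to near-factorizations. Given a $(k,\ell)$-near-factorization $(A,B)$, so $|A| = k$, $|B| = \ell$, $k\ell = n-1$, and $A + B = G \setminus \{0\}$, I would set $A_1 = A$ and $A_2 = -B$. Then $|A_2| = \ell$, and $\mathcal{D}(A_1, A_2) = \{a - (-b) : a \in A, b \in B\} = A + B = G \setminus \{0\}$, with the right multiplicity: since $|A||{-B}| = k\ell = n-1 = |G \setminus \{0\}|$ and the sumset covers every nonzero element, it must do so exactly once, so the multiset equation holds. Disjointness of $A_1$ and $A_2$: if some $g \in A \cap (-B)$, then $g = a = -b$ for $a \in A$, $b \in B$, hence $0 = a + b \in A + B$, contradicting $0 \notin A + B$. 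So $A_1, A_2$ form the desired GSEDF. Conversely, given an $(n,2;k,\ell;1,1)$-GSEDF $A_1, A_2$, set $A = A_1$ and $B = -A_2$; then $A + B = \mathcal{D}(A_1, A_2) = G \setminus \{0\}$ as a multiset, and $|A||B| = k\ell = n-1$ (forced by the cardinality of the multiset), so $(A,B)$ is a $(k,\ell)$-near-factorization; the disjointness of $A_1, A_2$ is actually not needed for this direction but follows a posteriori from $0 \notin A+B$.

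I would present this as two short paragraphs, one per direction, after a preliminary remark recording that $\mathcal{D}(A_2,A_1) = -\mathcal{D}(A_1,A_2)$ so only one of the two GSEDF conditions is independent. The one point requiring a word of care — and the closest thing to an obstacle — is the bookkeeping around multisets versus sets: the near-factorization condition $G - e = AB$ is stated as a set equation in the paper, but because $|A||B| = |G|-1$ it is equivalent to the multiset statement that each nonzero element is covered exactly once, and it is this multiset reading that matches the GSEDF definition. I would state that equivalence explicitly (a one-line counting argument: a multiset of size $n-1$ whose underlying set is all of $G \setminus \{0\}$ must be $G \setminus \{0\}$ with all multiplicities one) so that the translation between the two formalisms is airtight. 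Everything else is a direct substitution, so the proof is short.
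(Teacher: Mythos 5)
Your proposal is correct and takes essentially the same route as the paper: the paper's proof negates $A$ (sending $(A,B)$ to the GSEDF $(-A,B)$ and back via $(-A_1,A_2)$), while you negate $B$ instead, which is an immaterial variation of the same idea of converting sums to differences. Your added care about the multiset-versus-set reading and the disjointness check simply fills in what the paper dismisses as ``straightforward to see.''
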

\begin{proof}
Suppose $(A,B)$ is a $(k,\ell)$-near-factorization of $G$. Then it is straightforward to see that 
$-A$ and $B$ comprise an $(n,2;k,\ell;1,1)$-GSEDF in $G$. Conversely, if $A_1$ and $A_2$ comprise an $(n,2;k,\ell;1,1)$-GSEDF in $G$, then $(-A_1,A_2)$ is a $(k,\ell)$-near-factorization of $G$.
\end{proof}

Since they were first defined in 2016 in \cite{PS16}, there have been numerous papers that have studied SEDFs. 
There are various nonexistence results for SEDFs. In fact, the only parameter sets for which  $(n,m,\ell,\lambda)$-SEDFs are known to exist are the following:
\begin{itemize}
\item $(n,n,1,1)$-SEDFs (these are ``trivial'' SEDFs, consisting of all possible singleton sets)
\item $(n,m,2,\lambda)$-SEDFs (several constructions for SEDFs consisting of two sets are known; see \cite{BJWZ,HJN,HP})
\item a $(243,11,22,20)$-SEDF exists in $\eff_{243}$ (\cite{JeLi,WYFF})
\end{itemize}
We should also mention the paper \cite{LNC}, which specifically studies GSEDFs.


In this paper, we mainly use the language of near-factorizations. However, in view of Lemma \ref{GSEDF-NF}, every result on near-factorizations is automatically a result on GSEDFs having two sets.


\subsection{Dihedral groups}
\label{notation.sec}

We begin by summarizing some relevant material concerning dihedral groups.
The dihedral group $D_{n}$ of order $2n$, $n>2$ has the presentation
\begin{equation}
\label{dihedral.eq}
D_{n} = 
\left\langle
a,b : a^2=b^n= abab = e
\right\rangle,
\end{equation}
where $e$ is the identity element. $D_{n}$ is the automorphism group
of the $n$-gon and can be seen as a set of $n$ \emph{rotations} $\{b^j : 0\leq j < n\}$ and $n$ \emph{reflections} $\{ab^j : 0\leq j \leq n-1\}$.
Thus
\[
D_{n} = 
\left\{
a^i b^j : 0\leq i < 2, 0\leq j \leq n-1
\right\}.
\]
Dihedral group multiplication rules are as follows:
\begin{align*} (ab^i)(ab^j) &= b^{j-i}\\
b^i(ab^j) &= ab^{j-i}\\
(b^i)(b^j) &= b^{i+j}\\
(ab^i)(b^j) &= ab^{i+j},
\end{align*}
where exponents of $b$ are evaluated modulo $n$.
It is easily verified that  $(ab^i)^{-1} = ab^i$ and $(b^i)^{-1} = b^{-i} = b^{n-i}$.

All reflections have order two; the order of a rotation $b^j$ is $n / \GCD(n,j)$.

\medskip

Let $\Aut(D_{n})$ denote the automorphism group of $D_n$. If $f \in \Aut(D_{n})$, then $f$ is completely determined by
$x = f(a)$ and $y=f(b)$. Indeed $x$ and $y$
can be any pair of elements
such that $x$ is a reflection and $y$ is a rotation of order $n$ of the $n$-gon.
\begin{itemize}
\item
There are $\varphi(n)$ options for $y$, namely $y=b^i$, where $i \in \zed_n^*$. 
\item
There are $n$ options for $x$, namely $x=ab^j$, where $j \in \zed_n$.
\end{itemize}
For $i,j$, where $0\leq i \leq n-1$, $\GCD(i,n)=1$ and $0\leq j \leq n-1$,
we define $f=f_{i,j} $ by $f(b)=b^i$, $f(a)=ab^j$.
Because $f$ is an automorphism, we have 
\begin{align*}
f(a^e b^h) 
&= f(a^e)f(b^h)\\
&= f(a)^e f(b)^h\\
&= (a b^j)^e (b^i)^h \\
&=
\left\{\begin{array}{ll}
b^{ih}   &\text{if $e=0$}\\
ab^{j+ih}&\text{if $e=1$}.
\end{array}\right.
\end{align*}
Hence, the automorphism group of $D_{n}$ is 
\[
\Aut(D_{n}) = \{ f_{i,j}:
\text{$\GCD(i,n)=1$ and $0\leq j \leq n-1$}\},
\]
where the automorphism $f_{i,j}$ is defined as follows:
\[
f_{i,j}(a^e b^h) = a^eb^{je+ih},
\] 
for  $e = 0,1$ and $0\leq h \leq n-1$.

Suppose we form the composition $f_{i,j} \circ f_{i',j'}$ of two automorphisms $f_{i,j}$ and $f_{i',j'}$.
We have 
\begin{align*}
f_{i',j'}(f_{i,j}(a^e b^h)) 
&= f_{i',j'}((a b^j)^e (b^i)^h )\\
&= f_{i',j'}((a b^j)^{e}) f_{i',j'}((b^i)^{h})\\
&= f_{i',j'}((a b^j)^{e}) (b^h)^{ii'}\\
&=
\left\{\begin{array}{ll}
b^{hii'}   &\text{if $e=0$}\\
ab^{j'+ji' + hii'}&\text{if $e=1$}.
\end{array}\right.\end{align*}
Hence we have
\begin{equation}
\label{composition.eq}
f_{i,j} \circ f_{i',j'} = f_{ii',j + j'i}.
\end{equation}
\medskip

The identity automorphism is $f_{1,0}$. From this, it is easy to see that 
the inverse of $f_{i,j}$ can immediately be determined from 
(\ref{composition.eq}) to be 
\begin{equation}
\label{inverse.eq}
f_{i,j}^{-1}=f_{i^{-1},-i^{-1}j}. 
\end{equation}
Note that subscripts are evaluated modulo $n$ in (\ref{composition.eq}) and (\ref{inverse.eq}).

\subsection{Properties of near-factorizations of dihedral groups}
\label{properties.sec}

Because $|D_n| = 2n$ is even, it follows immediately that $k$ and $\ell$ are both odd
if a $(k,\ell)$-near-factorization of $D_n$ exists.

\begin{example}
\label{E1.exam}
Define $A=\{e,b,a\}$ and $B=\{b^2,b^5,ab,ab^4,ab^7\}$. It is easy to verify that 
$(A,B)$ is a $(3,5)$-near-factorization of $D_8$. We have
\begin{align*}
eB &= \{b^2,b^5,ab,ab^4,ab^7\}\\
bB &= \{b^3,b^6,a,ab^3,ab^6\}\\
aB &= \{ab^2,ab^5,b,b^4,b^7\}.
\end{align*}
The union of these three sets is $D_8 - e$.
$\hfill\blacksquare$
\end{example}

In Example \ref{E1.exam}, we see that $A$ contains one reflection and two rotations, and
$B$ contains two rotations and three reflections. In general, we have the following result.

\begin{theorem}
\label{size.thm}
If $(A,B)$ is  a $(k,\ell)$-near-factorization of $D_n$,
then  the number of rotations in  $A$ and $B$ is respectively
\begin{enumerate}[label={\textup{(\roman*)}}]
\item  $\tfrac{k-1}{2}$ and $\tfrac{\ell+1}{2}$; 
or \label{B}
\item  $\tfrac{k+1}{2}$ and $\tfrac{\ell-1}{2}$.
\label{A}
\end{enumerate}
\end{theorem}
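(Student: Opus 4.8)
The plan is to turn the statement into a one-line Diophantine fact by double counting rotations and reflections among the products $gh$, $g\in A$, $h\in B$. Fix notation: let $p$ be the number of rotations in $A$ and $q$ the number of rotations in $B$, so that $A$ has $k-p$ reflections and $B$ has $\ell-q$ reflections. Because $k\ell=|D_n|-1=2n-1$ is odd, both $k$ and $\ell$ are odd, so $\tfrac{k\pm1}{2}$ and $\tfrac{\ell\pm1}{2}$ are integers, and what must be shown is precisely that $(p,q)$ equals $\bigl(\tfrac{k-1}{2},\tfrac{\ell+1}{2}\bigr)$ or $\bigl(\tfrac{k+1}{2},\tfrac{\ell-1}{2}\bigr)$.

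Next I would record, straight from the multiplication rules of Section~\ref{notation.sec}, that a product of two elements of $D_n$ is a rotation exactly when the two factors are both rotations or both reflections, and is a reflection exactly when precisely one of them is a reflection. Since $(A,B)$ is a near-factorization, the $k\ell$ products $gh$ are pairwise distinct and form the set $D_n\setminus\{e\}$; as $e$ is a rotation, exactly $n-1$ of them are rotations and exactly $n$ are reflections. Counting the reflection-valued products according to the types of the two factors yields
\[
p(\ell-q)+(k-p)q=n,
\]
and, equivalently (the two counts add to $k\ell=2n-1$), the rotation count yields $pq+(k-p)(\ell-q)=n-1$.

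Finally I would substitute $n=\tfrac{k\ell+1}{2}$ into the reflection equation, clear denominators, and regroup the $\ell$- and $q$-terms; everything collapses to
\[
(2p-k)(\ell-2q)=1 .
\]
Since $k$ and $\ell$ are odd, $2p-k$ and $\ell-2q$ are odd integers whose product is $1$, hence both equal $1$ or both equal $-1$. The first case gives $p=\tfrac{k+1}{2}$, $q=\tfrac{\ell-1}{2}$ and the second gives $p=\tfrac{k-1}{2}$, $q=\tfrac{\ell+1}{2}$, which are exactly the two alternatives in the statement; one can also check directly that both pairs satisfy the counting equations, so neither alternative is spurious.

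I expect no real obstacle beyond bookkeeping. The only points needing care are invoking the rotation/reflection multiplication table correctly when splitting the $k\ell$ products into the four type-classes, and using the definition of near-factorization to guarantee that these products are pairwise distinct, so that the number of reflection-valued products is exactly $n$ and not merely at most $n$. Once this is in place, the identity $(2p-k)(\ell-2q)=1$ together with the parity of $k$ and $\ell$ finishes the argument.
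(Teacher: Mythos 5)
Your proposal is correct and is essentially the paper's own argument: both count the reflection-valued and rotation-valued products among the $k\ell$ distinct products $gh$ to reach the identity $(2p-k)(\ell-2q)=1$ and then read off the two sign cases. The only cosmetic difference is that you substitute $n=\tfrac{k\ell+1}{2}$ into one counting equation while the paper subtracts the two equations; these manipulations are equivalent.
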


\begin{proof}
$D_n$ is a group of order $2n$. The $n$ rotations in $D_n$ form a normal subgroup.
Let $\eta:D_n \mapsto \zed_2$ be the canonical homomorphism 
whose kernel is the subgroup of rotations. 
Then
$x=|\eta^{-1}(0)\cap A|$ and $y=|\eta^{-1}(0)\cap B|$ are the number of rotations in $A$ and $B$ respectively. Then
the number of reflections in $D_n$ is
\[
n= x(\ell-y)+(k-x)y\]
and the number of non-identity rotations in $D_n$ is
\[n-1= xy+(k-x)(\ell-y).\]
Subtracting, we obtain
\begin{align*}
1&= x(\ell-y)+(k-x)y - xy-(k-x)(\ell-y)\\
 &= (2x-k)(\ell-2y).
\end{align*}
Hence, either $2x-k=1$ and $\ell-2y=1$, which yields option~\ref{A},
or 
 $2x-k=-1$ and $\ell-2y=-1$, which yields option~\ref{B}.
\end{proof}

\begin{remark*}
Theorem \ref{size.thm} is a special case of a much more general result proven by
P\^{e}cher in \cite[Theorem 1]{Pech}. The proof of \cite[Theorem 1]{Pech} is quite lengthy, so we have included a proof of Theorem \ref{size.thm} because it is so simple.
\end{remark*}

We now present an example illustrating equivalent near-factorizations in $D_n$.

\begin{example} Recall the near-factorization of $D_8$ given in Example \ref{E1.exam}:
$A=\{e,b,a\}$ and $B=\{b^2,b^5,ab,ab^4,ab^7\}$. Let $f = f_{3,2}$ and $h = ab$.
We compute
\begin{align*}
f(A)h &= \{f_{3,2}(e), f_{3,2}(b), f_{3,2}(a)\} (ab)\\
&= \{e, b^3, ab^2\} (ab)\\
&= \{ ab, ab^6, b^7\} 
\end{align*}
and
\begin{align*}
h^{-1}f(B) &= (ab) \{f_{3,2}(b^2), f_{3,2}(b^5), f_{3,2}(ab), f_{3,2}(ab^4), f_{3,2}(ab^7)\}\\
&= ab \{b^6, b^7, ab^5, ab^6, ab^7\}\\
&= \{ab^7, a, b^4, b^5, b^6 \}.
\end{align*}
To verify that this is a near-factorization, we compute
\begin{align*}
ab h^{-1}f(B) &= ab \{ab^7, a, b^4, b^5, b^6 \}\\
&= \{b^6, b^7, ab^5, ab^6, ab^7 \}\\
ab^6 h^{-1}f(B) &= ab^6 \{ab^7, a, b^4, b^5, b^6 \}\\
&=  \{b, b^2, ab^2, ab^3, ab^4 \}\\
b^7 f(B) &= b^7 \{ab^7, a, b^4, b^5, b^6 \}\\
& = \{a, ab, b^3, b^4, b^5 \}.
\end{align*}
$\hfill\blacksquare$
\end{example}

Recall that a subset $X$ of a multiplicative group $(G,\cdot)$ is \emph{symmetric} if $X = X^{-1}$, where
$X^{-1} = \{ x^{-1} : x \in X\}$, and a near-factorization $(A,B)$ is \emph{symmetric} if $A$ and $B$ are both 
symmetric. A subset $X$ of the dihedral group $D_n$ is \emph{strongly symmetric} if 
$x = a^ib^j \in X$ if and only if $a^ib^{-j} \in X$, for  $i = 0,1$ and for all $j$, $0 \leq j \leq n-1$. 
Note that a strongly symmetric subset of $D_n$ is also a symmetric subset of $D_n$. A near-factorization 
$(A,B)$ of $D_n$ is \emph{strongly symmetric} if $A$ and $B$ are both 
strongly symmetric.


\section{Constructions for near-factorizations of dihedral groups}
\label{32.sec}

It was shown by de Caen {\it et al.}\ in 1990 in \cite[Example 2]{CGHK}
that there is a $(k,\ell)$-near-factorization of $D_n$ for all pairs of positive integers $(k,\ell)$ such that $k \ell = 2n-1$.
Later, in 2008, Basc\'{o} {\it et al.}\ proved in \cite[Corollary 1]{BHS} that there is a $(2^r-1,2^r+1)$-near-factorization of $D_n$ when $n = 2^{2r-1}$. We will show that the Basc\'{o} {\it et al.}\ near-factorizations
are equivalent to the de Caen {\it et al.}\ near-factorizations when $k = 2^r-1$ and $n = 2^{2r-1}$.

We begin with an example, to show that these two constructions of $(7,9)$-near-factorizations of $D_{32}$ are equivalent.

\begin{example}
\label{32.exam}
The  de Caen {\it et al.} near-factorization of $D_{32}$  (see
\cite[example 2, page 55]{CGHK}) is
\begin{align*}
A&=\{b,b^2,b^3,a,ab,ab^2,ab^3\}\\
B&=\{e,b^7,b^{14},b^{21},b^{28},ab^7,ab^{14},ab^{21},ab^{28}\}.
\end{align*}
Choosing $h_1=ab^3$ and $f_1=f_{1,0}$ we have:
\begin{align*}
f_1(A)h_1&=\{e,b,b^2,b^3,a,ab,ab^2\}\\
h_1^{-1}f_1(B)&=\{b^4,b^{11},b^{18},b^{25},ab^3,ab^{10},ab^{17},ab^{24},ab^{31}\}.
\end{align*}
This is in fact the canonical form of $(A,B)$.
 
The Basc\'{o} {\it et al.} near-factorization of $D_{32}$  (see
\cite[page 501]{BHS}) is
\begin{align*}
A'&=\{a,ab^{31},b^{31},ab^{10},b^{10},b^{21},ab^{21}\}\\
B'&=\{e,b^6,ab^{25},ab^6,b^{25},b^{12},ab^{19},ab^{12},b^{19}\}.
\end{align*}
Choosing $h_2=ab^{31}$ and $f_2=f_{3,31}$ we have:
\begin{align*}
f_2(A')h_2&=\{e,b,b^2,b^3,a,ab,ab^2\}\\
h_2^{-1}f_2(B')&=\{b^4,b^{11},b^{18},b^{25},ab^3,ab^{10},ab^{17},ab^{24},ab^{31}\}.
\end{align*}
This is the canonical form of $(A',B')$.
 
Thus 
\begin{equation}
\label{f1f2.eq}
\big(f_1(A)h_1,h_1^{-1}f_1(B)\big)
=
\big(f_2(A')h_2,h_2^{-1}f_2(B')\big)
\end{equation}
and it follows that these two near-factorizations of $D_{32}$ are equivalent.
$\hfill\blacksquare$
\end{example}

Referring to the two near-factorizations from Example \ref{32.exam}, 
we show how to directly map $(A,B)$ to $(A',B')$.
From (\ref{f1f2.eq}), it follows easily that 
\begin{align*}
f_2(A') &= f_1(A)h_1h_2^{-1}\\
A' &= (f_1 \circ f_2^{-1})(A) f_2^{-1}(h_1h_2^{-1})\end{align*}
and
\begin{align*}
f_2(B') &= h_2h_1^{-1} f_1(B) \\
B' &=  f_2^{-1}(h_2h_1^{-1})(f_1 \circ f_2^{-1})(B).
\end{align*}
Denote $f = f_1 \circ f_2^{-1}$ and $h = f_2^{-1}(h_1h_2^{-1})$.
Observe that
\begin{align*}
h^{-1} &= (f_2^{-1}(h_1h_2^{-1}))^{-1}\\
&= f_2^{-1}(h_2h_1^{-1}),
\end{align*}
since $f(x^{-1}) =(f(x))^{-1}$ for any automorphism $f$ and any $x$.
Hence, $(A',B') = \Phi_{f,h}(A,B)$.

We have $f_1=f_{1,0}$, $h_1=ab^3$,  $f_2=f_{3,31}$ and $h_2=ab^{31}$.
Then
\begin{align*}
f_1 \circ f_2^{-1} &= f_{1,0} \circ (f_{3,31})^{-1}\\
& = f_{1,0} \circ (f_{11,11})\\
& = f_{11,11}
\end{align*}
and 
\begin{align*}
f_2^{-1}(h_1h_2^{-1}) &= f_2^{-1}(ab^{3}ab^{31})\\
&= f_2^{-1}(b^{28})\\
&= f_{11,11}(b^{28})\\
&= b^{28 \times 11}\\
&= b^{20}.
\end{align*}
In other words,
\begin{align*}
\Phi_{f_{11,11},b^{20}}\big((A,B)\big) &= (A',B').
\end{align*}
Then, from Lemma~\ref{Phi Inverse}, we have
\begin{align*}\Phi_{f_{3,31},b^{4}}\big((A',B')\big) &= (A,B).
\end{align*}

However, it turns out (see Remark \ref{k=3.rem}) that there is a simpler direct equivalence mapping between $(A,B)$ and $(A',B)$:
\begin{equation}
\label{simple.eq}
\Phi_{f_{21,0},e}\big((A,B)\big) = (A',B') \quad 
\text{and} \quad 
\Phi_{f_{29,0},e}\big((A',B')\big) = (A,B).
\end{equation}

As we mentioned in Section \ref{32.sec}, there are two direct constructions for infinite classes of near-factorizations of dihedral groups that can be found in the literature, namely in \cite{BHS} and \cite{CGHK}.
In describing the constructions, we make use of the following notation.
Let $n = 2^{2k-1}$, $\alpha = 2^k-1$, $d_1 = 2^k + 3$, and 
$d_2 = 2^{k+1} - 3$. Further, let $a_0 = 2^{k-1}-1$, $b_0 = 2^{k-1} = a_0+1$ and $s = 2^{k-1} - 3$.

We first present the construction from \cite{CGHK}.

\begin{lemma}
\rm{\cite{CGHK}}
Define \[A = \left\{b, b^2, b^3,  \dots , b^{a_0}\right\}
\bigcup \left\{ab, ab^2, ab^3,  \dots , ab^{a_0}\right\}
\bigcup \{a\}\]
and 
\[
B= \left\{b^{\alpha}, b^{2{\alpha}}, b^{3{\alpha}}, \dots , b^{b_0{\alpha}}\right\}
\bigcup \left\{ab^{\alpha}, ab^{2{\alpha}}, ab^{3{\alpha}},  \dots , ab^{b_0{\alpha}}\right\}
\bigcup \{e\}.\]
Then $(A,B)$ is a $(2^k-1,2^k+1)$-near-factorization of $D_n$.
\end{lemma}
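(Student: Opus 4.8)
The plan is to verify directly that $|A|\,|B| = 2n - 1$ and that every nonidentity element of $D_n$ is obtained exactly once as a product $xy$ with $x \in A$, $y \in B$. The cardinality count is immediate: $A$ consists of the $a_0$ rotations $b,\dots,b^{a_0}$, the $a_0$ reflections $ab,\dots,ab^{a_0}$, and the reflection $a$, so $|A| = 2a_0 + 1 = 2(2^{k-1}-1)+1 = 2^k - 1 = \alpha$; similarly $B$ has the $b_0$ rotations $b^{\alpha},\dots,b^{b_0\alpha}$, the $b_0$ reflections $ab^{\alpha},\dots,ab^{b_0\alpha}$, and $e$, so $|B| = 2b_0 + 1 = 2^k + 1$. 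Then $|A|\,|B| = (2^k-1)(2^k+1) = 2^{2k}-1 = 2n-1$, as required.

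For the factorization condition, I would use the multiplication rules for $D_n$ to split the product set $AB$ according to whether the result is a rotation or a reflection, tracking the exponents of $b$ modulo $n = 2^{2k-1}$. Writing $A = R_A \cup S_A \cup \{a\}$ where $R_A = \{b^i : 1 \le i \le a_0\}$ and $S_A = \{ab^i : 1 \le i \le a_0\}$, and $B = R_B \cup S_B \cup \{e\}$ with $R_B = \{b^{j\alpha} : 1 \le j \le b_0\}$ and $S_B = \{ab^{j\alpha} : 1 \le j \le b_0\}$, the rotations in $AB$ come from $R_A R_B$, $S_A S_B$, $\{a\}\cdot S_B$, $R_A \cdot \{e\}$, and $\{a\}\cdot\{a\}$; the reflections come from $R_A S_B$, $S_A R_B$, $\{a\}\cdot R_B$, $S_A\cdot\{e\}$, and $\{a\}\cdot\{e\}$. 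Using the rules $(b^i)(b^{j\alpha}) = b^{i+j\alpha}$, $(ab^i)(ab^{j\alpha}) = b^{j\alpha - i}$, $a(ab^{j\alpha}) = b^{j\alpha}$, $(b^i)(ab^{j\alpha}) = ab^{j\alpha - i}$, $(ab^i)(b^{j\alpha}) = ab^{i+j\alpha}$, and $a(b^{j\alpha}) = ab^{j\alpha}$, each piece becomes a set of exponents of the form $\{\,\varepsilon i + j\alpha + c : 1 \le i \le a_0 \text{ (or } i=0\text{)},\ 1 \le j \le b_0 \text{ (or } j = 0\text{)}\,\}$ for appropriate signs $\varepsilon \in \{1,-1\}$ and shifts $c \in \{0\}$.

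The crux is therefore a purely number-theoretic claim about $\zed_n$: the relevant unions of sets $\{\pm i + j\alpha \bmod n\}$ tile $\zed_n$ (for the reflection exponents) and tile $\zed_n \setminus \{0\}$ together with suitable singletons (for the rotation exponents), each element covered exactly once. Since $\alpha = 2^k - 1$ and $n = 2^{2k-1} = 2\alpha^2 + \text{(lower order)}$ — more precisely $(2^k+1)\alpha = 2^{2k}-1 = 2n-1 \equiv n-1 \pmod n$, and $a_0 = 2^{k-1}-1$, $b_0 = 2^{k-1}$ — one checks that as $i$ ranges over a block of length $a_0$ and $j$ over a block of length $b_0$, the values $i + j\alpha$ run through a set of residues whose structure is governed by division with remainder by $\alpha$: roughly, $j\alpha$ picks out a ``coarse'' coordinate and $\pm i$ fills in a ``fine'' interval of length about $a_0 \approx \alpha/2$ around it, so that combining the $\varepsilon = +1$ and $\varepsilon = -1$ contributions (which come from the rotation-times-rotation versus reflection-times-reflection parts, and the rotation-times-reflection versus reflection-times-rotation parts) produces intervals of length $\approx \alpha$ that, shifted by multiples of $\alpha$, exactly partition $\zed_n$. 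I expect the main obstacle to be the careful bookkeeping of these interval endpoints modulo $n$ — in particular handling the ``wrap-around'' blocks and confirming that the leftover elements not covered by the rotation-part are precisely $\{0\}$ together with whatever singletons $R_A\cdot\{e\}$, $\{a\}\cdot S_B$, and $a^2 = e$ supply — and verifying that no element is covered twice, i.e.\ that the count $2n-1$ is achieved with no collisions. A clean way to discharge this is to observe that $(A,B)$ is strongly symmetric by inspection, so it suffices to check that $AB$ hits every target at least once (surjectivity), and then the cardinality identity $|A||B| = 2n-1 = |D_n \setminus \{e\}|$ forces each product to be attained exactly once; this reduces the argument to covering rather than exact tiling, which simplifies the interval analysis considerably.
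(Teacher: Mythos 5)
The paper itself gives no proof of this lemma---it is stated with a citation to de Caen \emph{et al.}---so I can only assess your proposal on its own terms. Your skeleton is the right one: the cardinality count $|A|\,|B|=(2^k-1)(2^k+1)=2n-1$ is correct, the case split into rotation and reflection targets via the $D_n$ multiplication rules is the natural reduction, and the observation that surjectivity of $AB$ onto $D_n\setminus\{e\}$ plus the cardinality identity forces exactness is valid and does simplify matters. However, two of your auxiliary claims are false. First, $(A,B)$ is \emph{not} strongly symmetric: $b\in A$ but $b^{-1}=b^{n-1}\notin A$ since $n-1=2^{2k-1}-1>a_0$. Fortunately you never actually need this---the cardinality argument alone gives surjectivity $\Rightarrow$ equality---so delete that sentence rather than rely on it. Second, $a\notin B$ (we have $\GCD(\alpha,n)=1$, so $j\alpha\not\equiv 0 \pmod n$ for $1\le j\le b_0$), so the products ``$\{a\}\cdot\{a\}$'' and ``$a^2=e$'' you list do not occur; the nine pieces of $AB$ are $R_AR_B$, $S_AS_B$, $\{a\}S_B$, $R_A\{e\}$ (rotations) and $R_AS_B$, $S_AR_B$, $\{a\}R_B$, $S_A\{e\}$, $\{a\}\{e\}$ (reflections).

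The more serious issue is that the crux---the tiling of $\zed_n$---is only described ``roughly,'' with the bookkeeping explicitly deferred; as written, the lemma's actual content is asserted rather than proved. The good news is that the computation is short and does work out, with no wrap-around to worry about. The rotation exponents obtained are $\{1,\dots,a_0\}$ (from $R_A\{e\}$) together with $\bigcup_{j=1}^{b_0}\{j\alpha-a_0,\dots,j\alpha+a_0\}$ (from $S_AS_B$, $\{a\}S_B$, $R_AR_B$). Each of these intervals has length $2a_0+1=\alpha$, and since $\alpha-2a_0=1$ consecutive intervals abut exactly; the first begins at $\alpha-a_0=a_0+1$ and the last ends at $b_0\alpha+a_0=2^{k-1}(2^k-1)+2^{k-1}-1=n-1$. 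Hence the union is precisely $\{1,\dots,n-1\}$, computed in the integers with no reduction mod $n$ needed. The reflection exponents are the same set together with $0$ from $a\cdot e$, giving all of $\zed_n$. Writing down these two identities ($\alpha-a_0=a_0+1$ and $b_0\alpha+a_0=n-1$) is exactly what turns your sketch into a proof; without them the argument is incomplete.
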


Now we recall the construction from \cite{BHS}.

\begin{lemma}\rm{\cite[Theorem 2]{BHS}}
Define \[A' = \left\{b^{-s}, b^{-s+d_1}, \dots , b^{-s+(a_0-1)d_1}\right\}
\bigcup \left\{ab^{-s}, ab^{-s+d_1}, \dots , ab^{-s+(a_0-1)d_1}\right\}
\bigcup \{a\}\]
and 
\[
B' = \left\{b^{-s+d_2}, b^{-s+2d_2}, \dots , b^{-s+b_0d_2}\right\}
\bigcup \left\{ab^{-s+d_2}, ab^{-s+2d_2}, \dots , ab^{-s+b_0d_2}\right\}
\bigcup \{e\}.\]
Then $(A',B')$ is a $(2^k-1,2^k+1)$-near-factorization of $D_n$.
\end{lemma}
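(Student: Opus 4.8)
The goal is to prove that the Bascó \textit{et al.} near-factorization $(A',B')$ of $D_n$ is equivalent to the de Caen \textit{et al.} near-factorization $(A,B)$ whenever $n = 2^{2k-1}$ (the final statement being the claim that $(A',B')$ is a valid near-factorization, but in context the substantive claim is this equivalence). The plan is to exhibit an explicit automorphism $f_{i,0} \in \Aut(D_n)$ with $j=0$ and trivial translation $h = e$ such that $\Phi_{f_{i,0},e}(A,B) = (A',B')$; equation~(\ref{simple.eq}) already hints that for $k=3$ the right choice is $f_{21,0}$, so I would look for the pattern governing $i$ as a function of $k$.

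First I would observe that both $A$ and $A'$ have the same shape: a set of $a_0$ consecutive (resp.\ arithmetic-progression) rotations $b^t$, the matching $a_0$ reflections $ab^t$ with the same exponents, plus the singleton $\{a\}$; and similarly for $B, B'$ with $b_0$ terms each and the singleton $\{e\}$. Since $f_{i,0}(b^t) = b^{it}$ and $f_{i,0}(ab^t) = ab^{it}$, applying $f_{i,0}$ (and no translation) sends the exponent-set $\{1,2,\dots,a_0\}$ in $A$ to $\{i, 2i, \dots, a_0 i\}$, and fixes $a$ and $e$. So the whole problem reduces to showing that the exponent multiset $\{i, 2i, \dots, a_0 i\} \pmod n$ equals $\{-s, -s+d_1, \dots, -s+(a_0-1)d_1\} \pmod n$, and simultaneously $\{i\alpha, 2i\alpha, \dots, b_0 i \alpha\} \pmod n$ equals $\{-s+d_2, -s+2d_2, \dots, -s+b_0 d_2\} \pmod n$, for a single value of $i \in \zed_n^*$. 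This is the key step and the main obstacle: finding $i$ and verifying these two set equalities.

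To find $i$, I would match the arithmetic progressions termwise. The $A$-side exponents $\{i,2i,\dots,a_0 i\}$ form an AP with common difference $i$ and first term $i$; the $A'$-side is an AP with common difference $d_1 = 2^k+3$ and first term $-s = 3 - 2^{k-1}$. Since these are sets, not ordered tuples, I expect the correspondence to be order-reversing: the first term $i$ of one should match the last term $-s+(a_0-1)d_1$ of the other, forcing $i \equiv -d_1 \pmod n$ (so common differences are negatives) and then $i \equiv -s + (a_0-1)d_1 \pmod n$. I would check these two conditions are consistent modulo $n = 2^{2k-1}$ using $a_0 = 2^{k-1}-1$, $d_1 = 2^k+3$, $s=2^{k-1}-3$, and $\alpha = 2^k-1$; a short computation should pin down $i \equiv -(2^k+3) \pmod{2^{2k-1}}$ (one can sanity-check: for $k=3$, $n=32$, this gives $i \equiv -11 \equiv 21$, matching (\ref{simple.eq})). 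Then I would verify the $B$-side: show $\{i\alpha, 2i\alpha,\dots,b_0 i\alpha\} = \{-s+d_2,\dots,-s+b_0 d_2\} \pmod n$, which amounts to checking $i\alpha \equiv -d_2 \pmod n$ (the common differences match up to sign) together with one boundary term, e.g.\ $i\alpha \equiv -s + b_0 d_2 \pmod n$ or $b_0 i \alpha \equiv -s + d_2 \pmod n$. This uses $d_2 = 2^{k+1}-3$, $b_0 = 2^{k-1}$, and $\alpha = 2^k-1$, and again reduces to an identity modulo $2^{2k-1}$; the relation $\alpha^2 = (2^k-1)^2 = 2^{2k} - 2^{k+1}+1 \equiv 1 - 2^{k+1} \pmod{2^{2k-1}}$ will be useful here.

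Finally, once $i$ is identified and the two set equalities are established modulo $n$, I would conclude that $\Phi_{f_{i,0},e}(A,B) = (A',B')$, and by Lemma~\ref{Phi Inverse} the inverse equivalence is $\Phi_{f_{i^{-1},0},e}(A',B') = (A,B)$; computing $i^{-1} \pmod n$ (e.g.\ $21^{-1} \equiv 29 \pmod{32}$) recovers the second half of (\ref{simple.eq}). Since applying an automorphism to one side of a near-factorization and leaving the translation trivial automatically yields a near-factorization of $D_n$ (as recorded in the discussion of equivalence in Section~\ref{intro.sec}), the fact that $(A',B')$ is a $(2^k-1,2^k+1)$-near-factorization of $D_n$ follows at once from the corresponding fact for $(A,B)$, which is the de Caen \textit{et al.} lemma. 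I expect the only delicate point to be bookkeeping of signs and the reduction of the two modular identities; everything else is formal. It would also be worth remarking (cf.\ Remark~\ref{k=3.rem}) that the equivalence is realized by the especially simple map $f_{i,0}$ with no translation, which is why the direct equivalence in (\ref{simple.eq}) is so clean.
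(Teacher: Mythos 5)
Your proposal is correct and follows essentially the same route as the paper: the paper establishes this lemma by exhibiting the equivalence $\Phi_{f_{-d_1,0},e}\big((A,B)\big)=(A',B')$, using exactly the identities you isolate, namely $\alpha d_1\equiv d_2$, $s\equiv a_0d_1$ and $s\equiv (b_0+1)d_2 \pmod n$, so your $i\equiv -d_1$ and the two boundary checks reproduce the paper's argument. The only cosmetic difference is that you derive $i$ by matching endpoints of the arithmetic progressions rather than positing $f_{-d_1,0}$ outright.
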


We will prove that $(A,B)$ and $(A',B')$ are equivalent near-factorizations.
First, we observe that  
\begin{align*} \alpha  d_1 - d_2 &= (2^k-1)(2^k+3) - (2^{k+1} - 3)\\
&= 2^{2k} +  2(2^k) - 3 - (2^{k+1}-3)\\ 
&= 2^{2k}\\
&= 2n,
\end{align*} so $\alpha  d_1 \equiv d_2 \bmod n.$ 
This means that $ \alpha \equiv d_2(d_1)^{-1} \bmod n,$ which is important in specifying the equivalence mapping. 

We will use the automorphism $f = f_{-d_1,0}$. 
In what follows, we make use of the fact that $s \equiv a_0d_1 \bmod n$ (equivalently, $s(d_1)^{-1} \equiv a_0 \bmod n$). This is proven as follows:
\begin{align*}
a_0d_1 - s  &= (2^{k-1}-1)(2^k+3) - (2^{k-1} - 3)\\
&= 2^{2k-1} - 2^k + 3(2^{k-1}) - 3 - 2^{k-1} + 3\\
&= 2^{2k-1} \\
& = n.
\end{align*}

We have 
\begin{align*} f(A) &=  \left\{b^{-d_1}, b^{-2d_1}, b^{-3d_1},  \dots , b^{-a_0d_1}\right\}
\bigcup \left\{ab^{-d_1}, ab^{-2d_1}, ab^{-3d_1},  \dots , ab^{-a_0d_1}\right\}
\bigcup \{a\}.
\end{align*}
Observe that
\begin{align*}
-d_1 &\equiv -s + (a_0-1)d_1 \bmod n\\
-2d_1 &\equiv -s + (a_0-2)d_1 \bmod n\\
\vdots \\
-a_0d_1 &\equiv -s \bmod n.
\end{align*}
Hence,
\begin{align*} f(A) &=  \left\{b^{-s + (a_0-1)d_1}, b^{-s + (a_0-2)d_1},   \dots , b^{-s}\right\}
\bigcup \left\{ab^{-s + (a_0-1)d_1}, ab^{-s + (a_0-2)d_1},   \dots , ab^{-s}\right\}
\bigcup \{a\}\\
&= A'.
\end{align*}
Also, 
\begin{align*} f(B) &= 
\left\{b^{-d_1\alpha}, b^{-2d_1\alpha}, b^{-3d_1{\alpha}}, \dots , b^{-b_0{d_1\alpha}}\right\}
\bigcup \left\{ab^{-d_1\alpha}, ab^{-2d_1{\alpha}}, ab^{-3d_1{\alpha}},  \dots , ab^{-b_0d_1{\alpha}}\right\}
\bigcup \{e\}
\end{align*}
Since $\alpha d_1 \equiv d_2 \bmod n$, we have
\begin{align*} f(B) &= 
\left\{b^{-d_2}, b^{-2d_2}, b^{-3d_2}, \dots , b^{-b_0d_2}\right\}
\bigcup \left\{ab^{-d_2}, ab^{-2d_2}, ab^{-3d_2},  \dots , ab^{-b_0d_2}\right\}
\bigcup \{e\}\\
&= 
\left\{b^0, b^{-d_2}, b^{-2d_2}, b^{-3d_2}, \dots , b^{-b_0d_2}\right\}
\bigcup \left\{ab^{-d_2}, ab^{-2d_2}, ab^{-3d_2},  \dots , ab^{-b_0d_2}\right\}.
\end{align*}

Now we make use of the fact that $s \equiv (b_0+1)d_2 \bmod n$. This is proven as follows:
\begin{align*}
(b_0+1)d_2 - s  &= (2^{k-1} + 1)(2^{k+1} - 3) - (2^{k-1} - 3)\\
&= 2^{2k} + 2^{k+1} - 3(2^{k-1}) - 3 - 2^{k-1} + 3\\
&= 2^{2k} \\
& = 2n.
\end{align*}
It therefore follows that
\begin{align*}
0 &\equiv -s + (b_0+1)d_2 \bmod n\\
-d_2 &\equiv -s + b_0d_2 \bmod n\\
-2d_2 &\equiv -s + (b_0-1)d_2 \bmod n\\
\vdots \\
-(b_0-1)d_2 &\equiv -s + 2d_2 \bmod n\\
-b_0d_2 &\equiv -s +d_2 \bmod n.
\end{align*}

Hence, 
\begin{align*}
B' &= \left\{b^{-s+d_2}, b^{-s+2d_2}, \dots , b^{-s+b_0d_2}\right\}
\bigcup \left\{ab^{-s+d_2}, ab^{-s+2d_2}, \dots , ab^{-s+b_0d_2}\right\}
\bigcup \left\{b^{-s+(b_0+1)d_2}\right\}
\\
&= \left\{b^{-s+d_2}, b^{-s+2d_2}, \dots , b^{-s+b_0d_2}, b^{-s+(b_0+1)d_2 }\right\}
\bigcup \left\{ab^{-s+d_2}, ab^{-s+2d_2}, \dots , ab^{-s+b_0d_2}\right\}\\
&= \left\{b^{-b_0d_2}, b^{-(b_0-1)d_2}, \dots , b^{-d_2}, b^{0 }\right\}
\bigcup \left\{ab^{-b_0d_2}, ab^{-(b_0-1)d_2}, \dots , ab^{-d_2}\right\}\\
&= f(B).
\end{align*}
Therefore, we have proven the following theorem.

\begin{theorem} The two near-factorizations $(A,B)$ and $(A',B')$ of $D_{2^{k-1}}$, $k \geq 3$ are equivalent. 
Specifically, \[
\Phi_{f_{-d_1,0},e}\big((A,B)\big) = (A',B').
\]
\end{theorem}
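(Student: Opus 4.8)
The plan is to produce an explicit equivalence map taking $(A,B)$ to $(A',B')$, namely $\Phi_{f,e}$ with $f = f_{-d_1,0}$. Since the translation element is $h=e$, the map collapses to $\Phi_{f,e}(A,B) = (f(A),f(B))$, so the entire proof reduces to verifying the two set equalities $f(A)=A'$ and $f(B)=B'$. First I would check that $f_{-d_1,0}$ really is an element of $\Aut(D_n)$: by the description of $\Aut(D_n)$ this needs $\GCD(d_1,n)=1$, which holds because $d_1 = 2^k+3$ is odd while $n = 2^{2k-1}$ is a power of $2$. Recalling that $f_{i,j}(a^e b^h) = a^e b^{je+ih}$, we get $f(b^h)=b^{-d_1 h}$ and $f(ab^h)=ab^{-d_1 h}$; thus $f$ fixes $a$ and $e$ and multiplies every rotation exponent and every reflection exponent by $-d_1$.

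Next I would record the three congruences that drive the re-indexing, each obtained from a one-line expansion: $\alpha d_1 \equiv d_2 \pmod n$ (the difference is $2^{2k}=2n$), $a_0 d_1 \equiv s \pmod n$ (the difference is $2^{2k-1}=n$), and $(b_0+1)d_2 \equiv s \pmod n$ (the difference is $2^{2k}=2n$). Applying $f$ to $A$ yields the rotation exponents $-d_1,-2d_1,\dots,-a_0 d_1$, the reflections on the same exponents, and the fixed element $a$; from $a_0 d_1 \equiv s$ we get $-m d_1 \equiv -s+(a_0-m)d_1 \pmod n$ for $m=1,\dots,a_0$, which rewrites both the rotation and the reflection part of $f(A)$ as exactly the progression $-s,-s+d_1,\dots,-s+(a_0-1)d_1$, and $\{a\}$ is unchanged; hence $f(A)=A'$.

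For $B$ I would first use $\alpha d_1 \equiv d_2$ to turn the exponents $-d_1\alpha,-2d_1\alpha,\dots,-b_0 d_1\alpha$ of $f(B)$ into $-d_2,-2d_2,\dots,-b_0 d_2$, leaving the fixed element $e$. The one step that is not purely mechanical — and the place an off-by-one error is most likely to creep in — is absorbing $e = b^0$ into the rotation progression: $(b_0+1)d_2 \equiv s$ gives $0 \equiv -s+(b_0+1)d_2 \pmod n$, and more generally $-m d_2 \equiv -s+(b_0+1-m)d_2 \pmod n$ for $m=0,1,\dots,b_0$, so the rotation exponents of $f(B)$ become $-s+d_2,\dots,-s+b_0 d_2,-s+(b_0+1)d_2$, whose last term is $\equiv 0$; this matches the rotation part of $B'$ (the block $b^{-s+d_2},\dots,b^{-s+b_0 d_2}$ together with $b^0$), while the reflection exponents become $-s+d_2,\dots,-s+b_0 d_2$, matching the reflection part of $B'$. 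Combining $f(A)=A'$ and $f(B)=B'$ gives $\Phi_{f_{-d_1,0},e}(A,B)=(A',B')$, proving the two near-factorizations equivalent. The only real care needed throughout is bookkeeping the arithmetic progressions modulo $n$ and checking that the lone ``extra'' element lands correctly — trivially ($\{a\}$) for $A$, and via the third congruence for $B$.
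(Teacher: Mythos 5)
Your proposal is correct and follows essentially the same route as the paper: the same automorphism $f_{-d_1,0}$ with $h=e$, the same three congruences $\alpha d_1 \equiv d_2$, $a_0 d_1 \equiv s$, and $(b_0+1)d_2 \equiv s \pmod n$ (with the same one-line expansions), and the same re-indexing of the arithmetic progressions, including the absorption of $e=b^0$ into the rotation block of $f(B)$. Your added check that $\GCD(d_1,n)=1$ (so that $f_{-d_1,0}$ is genuinely an automorphism) is a small point the paper leaves implicit.
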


\begin{remark}
\label{k=3.rem}
When $k = 3$, we have $n = 32$ and $d_1 = 11$, so $-d_1 = 21$. The equivalence mapping in this case is the same as the equivalence mapping (\ref{simple.eq}). 
\end{remark}

In \cite{BHS}, a graph $P = P(\Gamma,A,B)$ is constructed from a near-factorization 
$(A,B)$ in a finite group $\Gamma$ as follows. The vertices of $P$ are the elements of $\Gamma$. Let us define a hypergraph $R := \{gA : g \in \Gamma\}$. Finally, let two vertices 
$x$, $y$ of $P$ be adjacent if and only if some $gA \in R$
contains both $x$ and $y$.

\begin{theorem}\label{LemP}
If the near-factorizations $(A,B)$ and $(A',B')$ of the 
finite group  $\Gamma$ are equivalent, then the graphs
$P = P (\Gamma, A, B)$ and $P' = P (\Gamma, A', B')$
are isomorphic.
\end{theorem}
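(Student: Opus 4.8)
The plan is to show that equivalence of near-factorizations induces a graph isomorphism of the associated graphs $P$ and $P'$ by exhibiting the isomorphism explicitly. Suppose $(A',B') = \Phi_{f,h}(A,B) = (f(A)h, h^{-1}f(B))$ for some $f \in \Aut(\Gamma)$ and $h \in \Gamma$. Both $P$ and $P'$ have vertex set $\Gamma$, so I need a bijection $\psi : \Gamma \to \Gamma$ that carries edges of $P$ to edges of $P'$. The natural candidate is $\psi(x) = f(x) h$, which is a bijection of $\Gamma$ (composition of the automorphism $f$ with right translation by $h$).

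The key step is to verify that $\psi$ sends the hyperedge system $R = \{ gA : g \in \Gamma \}$ onto $R' = \{ g' A' : g' \in \Gamma \}$. Compute, for $g \in \Gamma$,
\[
\psi(gA) = \{ f(gx)h : x \in A \} = f(g)\,\{ f(x)h : x \in A \} = f(g)\, f(A)h = f(g)\, A'.
\]
As $g$ ranges over $\Gamma$ and $f$ is a bijection, $f(g)$ ranges over $\Gamma$, so $\psi(\{ gA : g \in \Gamma\}) = \{ g' A' : g' \in \Gamma \} = R'$. Since the adjacency relation in $P$ (resp.\ $P'$) is ``$x,y$ lie in a common hyperedge of $R$ (resp.\ $R'$),'' and $\psi$ maps the hyperedge system bijectively to the hyperedge system, $\psi$ maps edges to edges and non-edges to non-edges; hence $\psi$ is an isomorphism $P \to P'$. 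Note that the right translation by $h$ is exactly what makes $\psi(gA)$ come out as a left coset-type set $f(g)A'$; the $B$ (resp.\ $B'$) part plays no role in the definition of $P$, so it need not be tracked.

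The only mild subtlety worth flagging is bookkeeping with left versus right multiplication in a possibly nonabelian group: one must be careful that $\psi(gx) = f(g)f(x)h$ and that the ``$h$'' attaches on the right so that $f(A)h = A'$ appears, not $hf(A)$. Once that is set up correctly the computation $\psi(gA) = f(g)A'$ is immediate. The main obstacle is therefore essentially nonexistent — this is a short verification — so the write-up will simply define $\psi$, check it is a bijection, check $\psi(R) = R'$ via the displayed computation, and conclude. It is also enough, strictly speaking, to prove the claim when $(A',B')$ is obtained from $(A,B)$ by a single automorphism and a single translation, since this already covers the definition of equivalence given in the paper.
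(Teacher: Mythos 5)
Your proof is correct and is essentially the paper's argument: the paper uses exactly the same map $\theta(x)=f(x)h$ and the same key computation that $\theta(gA)=f(g)A'$. The only difference is presentational — the paper verifies edge preservation in each direction separately (applying $\theta$ and then $\theta^{-1}$), whereas you note once that a bijection carrying the hyperedge system $R$ onto $R'$ automatically preserves both edges and non-edges.
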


\begin{proof}
Suppose the near-factorizations $(A,B)$ and $(A',B')$ of the 
finite group  $\Gamma$ are equivalent. Then there exists
$f \in \Aut(\Gamma)$  and $h \in \Gamma$ such that
\[
A'= f(A)h \quad 
\text{ and  } \quad
B'= h^{-1}f(B).
\]
Let 
$R = \{xA : x \in \Gamma\}$
and
$R' = \{xA' : x \in \Gamma\}$ 
be the hypergraphs 
that are associated with $(A,B)$ and $(A',B')$, respectively.
Define $\theta:\Gamma\rightarrow \Gamma$ by
$\theta(x)= f(x)h$. Then 
\[
\theta(A)=\{\theta(x): x \in A\} 
=
\{f(x)h: x \in A\} 
=f(A)h
= A'
.\]
Further,  for any $x,y\in\Gamma$, it holds that 
\begin{equation}
\label{theta.eq}
\theta(xy)= f(xy)h = f(x)f(y)h= f(x)\theta(y).
\end{equation}

Now consider two vertices $x_1$ and $x_2$ of
the graph $P$. 
Suppose $x_1$ and $x_2$ are adjacent in $P$.
Then there exists $g \in G$
such that $x_1 = ga_1$ and $x_2=ga_2$ for some $a_1,a_2 \in A$.
Let $g'=f(g)$.
Then for $i=1,2$, we have from (\ref{theta.eq}) that
\[
\theta(x_i)=\theta(ga_i) = f(g)\theta(a_i) = g'f(a_i)h.\]
Note that $f(a_1)h, f(a_2)h \in A' = f(A)h$.
Therefore $\theta(x_1)$ and $\theta(x_2)$ are adjacent in $P'$.

To prove the converse, suppose that $y_1$ and $y_2$ are adjacent in $P’$.
Hence, $y_i = g a_i'$, for $i = 1,2$, where $a_1', a_2' \in A’ = f(A)h$.
We have $a_i’ = f(a_i) h$ for some $a_i \in A$, $i = 1,2$.
So $y_i = g f(a_i) h$, for $i = 1,2$.

We now apply $\theta^{-1}$. Note that  $\theta^{-1}(y) = f^{-1}(yh^{-1})$ for any $y \in \Gamma$.
Hence \[\theta^{-1}(y_i)  = f^{-1}(g f(a_i)) = f^{-1}(g) f^{-1}(f(a_i)) =  f^{-1}(g)  a_i.\]
It follows that  $\theta^{-1}(y_1)$  and $\theta^{-1}(y_2)$ are both in  $f^{-1}(g) A$.
Therefore they are adjacent in $P$.
\end{proof}

The following notation is from \cite{BHS}:
\begin{quotation}
Given a graph $G$, $\alpha = \alpha(G)$ denotes the size of the largest stable (or independent) set of vertices of $G$ while $\omega = \omega(G)$ is the size of its largest clique. An $w$-clique is a clique with $\omega$ vertices.
The edge $e=xy$ is \emph{$\alpha$-critical} in $G$ if $\alpha(G-e)>\alpha(G)$. The pair $xy$ of
vertices is an \emph{$\omega$-critical non-edge} if the edge $xy$ is $\alpha$-critical in $G$. Obviously, this is equivalent to each of the following properties: i) $\omega(G + xy) > \omega(G)$; and ii) there exists a pair of cliques $(X, Y )$ such that $X - Y = \{x\}$, $Y - X = \{y\}$. 
\end{quotation}

The authors of~\cite{BHS} provide  two definitions
concerning alternating graphs.

\begin{definition}[{Bocs\'{o} \textit{et al.}, \cite[Definition 1]{BHS}}] 
The graph $P$ is \emph{alternating} if its $\alpha$-critical edges have no common endpoints, 
and the same is true for their $\omega$-critical non-edges and both ``matchings''
cover the whole vertex set of the graph.
\end{definition}

\begin{definition}[{Bocs\'{o} \textit{et al.}, \cite[Definition 2]{BHS}}]\label{DefAlt}
A near-factorization $(A, B)$ in the group $\Gamma$ is \emph{alternating} if the graph 
$P = P (\Gamma, A, B)$ is alternating.
\end{definition}

\begin{corollary}
Two equivalent near-factorizations of a finite group $G$
are either both alternating or both non-alternating.
\end{corollary}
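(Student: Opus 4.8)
The plan is to deduce the corollary immediately from Theorem~\ref{LemP} together with Definition~\ref{DefAlt}. Suppose $(A,B)$ and $(A',B')$ are equivalent near-factorizations of the finite group $G$. By Theorem~\ref{LemP}, the associated graphs $P = P(G,A,B)$ and $P' = P(G,A',B')$ are isomorphic. Since the property of a graph being \emph{alternating} is defined purely in graph-theoretic terms (in terms of its $\alpha$-critical edges, its $\omega$-critical non-edges, and whether these two ``matchings'' cover the vertex set), it is invariant under graph isomorphism. Hence $P$ is alternating if and only if $P'$ is alternating. By Definition~\ref{DefAlt}, $(A,B)$ is alternating precisely when $P$ is, and $(A',B')$ is alternating precisely when $P'$ is; therefore $(A,B)$ is alternating if and only if $(A',B')$ is.

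The one point that deserves a sentence of justification is the claim that ``alternating'' is an isomorphism-invariant property of graphs. This follows because a graph isomorphism $\psi:P\to P'$ carries edges to edges and non-edges to non-edges, carries stable sets to stable sets and cliques to cliques (preserving their sizes), and hence preserves the values $\alpha$ and $\omega$; consequently it maps $\alpha$-critical edges bijectively to $\alpha$-critical edges and $\omega$-critical non-edges bijectively to $\omega$-critical non-edges. Thus the condition that the $\alpha$-critical edges have no common endpoints, that the $\omega$-critical non-edges have no common endpoints, and that both sets of pairs cover all vertices is transported verbatim from $P$ to $P'$. So I expect no real obstacle here; the ``hard part'' — establishing the graph isomorphism from equivalence — has already been done in Theorem~\ref{LemP}, and all that remains is this short observation about invariance. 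I would write the proof in two or three sentences along exactly these lines.

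\begin{proof}
Suppose $(A,B)$ and $(A',B')$ are equivalent near-factorizations of $G$. By Theorem~\ref{LemP}, the graphs $P = P(G,A,B)$ and $P' = P(G,A',B')$ are isomorphic, say via an isomorphism $\psi : P \to P'$. Since $\psi$ preserves edges and non-edges, as well as the sizes of stable sets and cliques, it preserves $\alpha$ and $\omega$ and therefore maps the $\alpha$-critical edges of $P$ bijectively onto those of $P'$ and the $\omega$-critical non-edges of $P$ bijectively onto those of $P'$. Consequently the $\alpha$-critical edges of $P$ have no common endpoints if and only if the same holds in $P'$, likewise for the $\omega$-critical non-edges, and the two ``matchings'' cover $V(P)$ if and only if they cover $V(P')$. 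Hence $P$ is alternating if and only if $P'$ is alternating, and by Definition~\ref{DefAlt}, $(A,B)$ is alternating if and only if $(A',B')$ is.
\end{proof}
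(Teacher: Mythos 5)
Your proof is correct and follows exactly the paper's route: the paper also deduces the corollary directly from Theorem~\ref{LemP} and Definition~\ref{DefAlt}, leaving the isomorphism-invariance of the ``alternating'' property implicit, whereas you spell it out. No issues.
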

\begin{proof}
This follows from  Lemma~\ref{LemP} and Definition~\ref{DefAlt} .
\end{proof}

We conclude
that the near-factorizations of $D_{2^{2k-1}}$
found in ~\cite{BHS} and ~\cite{CGHK} are either both alternating or both non-alternating,
contrary to the claims in~\cite{BHS}.

\subsection{Some enumerations of nonequivalent near-factorizations of $D_n$}

Using the computer, we have enumerated 
all nonequivalent near-factorizations of $D_n$ for $n \leq 32$. We used the technique described in \cite{KPS}, adapted to the setting of dihedral groups, to 
obtain the following result.

\begin{theorem}
\label{enum.thm}
For all $n \leq 32$ and all proper divisors $k$ of $2n-1$, there is a unique $(k,(2n-1)/k)$-near-factorization of $D_n$ up to equivalence.
\end{theorem}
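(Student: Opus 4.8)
The plan is to verify this by an exhaustive, computer-assisted enumeration: for each $n \le 32$ and each proper divisor $k$ of $2n-1$, generate every $(k,\ell)$-near-factorization of $D_n$ with $\ell=(2n-1)/k$, reduce the list modulo the equivalence relation $\Phi_{f,h}$, and check that exactly one equivalence class survives. Two preliminary observations trim the workload. First, $2n-1$ is odd, so $k$ and $\ell$ are odd, and when $2n-1$ is prime there is nothing to prove beyond the trivial near-factorization; so the real content concerns the short list of $n\le 32$ with $2n-1$ composite. Second, $(A,B)\mapsto(B^{-1},A^{-1})$ sends $(k,\ell)$-near-factorizations to $(\ell,k)$-near-factorizations and, since $f(X)^{-1}=f(X^{-1})$ for $f\in\Aut(D_n)$, respects equivalence, so it suffices to treat the cases with $k\le\ell$. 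Throughout I would follow the isomorph-free generation method of \cite{KPS}, transposed from cyclic groups to $D_n$.

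The next step is to normalise the representatives. Every near-factorization is equivalent to one with $e\in A$: take $f$ to be the identity automorphism and $h=a^{-1}$ for some $a\in A$, so that $e=aa^{-1}\in f(A)h$. With this normalisation in force one further prunes using the equivalence maps that preserve the condition $e\in A$, and one splits into the two cases of Theorem \ref{size.thm} according to whether $A$ contains $\tfrac{k-1}{2}$ or $\tfrac{k+1}{2}$ rotations (the count in $B$ being then determined). This produces a comparatively small family of candidate ``left parts'' $A$, one per symmetry class, which can be generated by canonical augmentation.

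The core of the search exploits the fact that, once $A$ is fixed, locating the compatible right parts $B$ is an exact-cover problem. Since $AB=\bigcup_{b\in B}Ab$, $|A|\,|B|=|D_n|-1$, and $AB=D_n\setminus\{e\}$, the translates $Ab$ for $b\in B$ must partition $D_n\setminus\{e\}$; moreover $e\notin Ab$ forces $b\notin A^{-1}$. So for each candidate $A$ one runs a standard exact-cover routine (for instance, Knuth's dancing links) to find every way of partitioning $D_n\setminus\{e\}$ into $\ell$ sets drawn from $\{Ag:g\in D_n,\ g\notin A^{-1}\}$; each solution is a near-factorization $(A,B)$, and conversely every near-factorization with that left part arises this way. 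The resulting list is then passed through an isomorph-rejection step: using the explicit automorphisms $f_{i,j}$ of $D_n$ and the inverse formula of Lemma \ref{Phi Inverse}, one computes the canonical form (the lexicographic minimum over the $\Phi_{f,h}$-orbit) of each near-factorization and counts the distinct values. The claim to be confirmed is that this count is $1$ for every admissible pair $(n,k)$, which simultaneously re-confirms existence (consistent with the construction of \cite{CGHK}) and establishes uniqueness.

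The step I expect to be the main obstacle is making this both fast enough and demonstrably correct at the top of the range, $n=32$, where $2n-1=63=7\cdot 9$ and $|D_{32}|=64$, so that one must classify $(3,21)$- and $(7,9)$-near-factorizations (the dual $(21,3)$ and $(9,7)$ cases following by the symmetry above). A naive scan over all choices of $A$ and $B$ is hopeless, so feasibility rests on (a) generating $A$ only up to the equivalence-preserving symmetry and the rotation-count constraint of Theorem \ref{size.thm}, and (b) the exact-cover reformulation for $B$, which prunes violently because any partial selection of translates that already overlaps, or already covers $e$, is discarded at once. For correctness I would cross-check the program against the explicit infinite families of \cite{CGHK} and \cite{BHS} (which must appear among the solutions and be recognised as mutually equivalent, in agreement with Section \ref{32.sec}), against small hand-computable instances such as the $(3,5)$-near-factorization of $D_8$ in Example \ref{E1.exam}, and by rerunning the classification with an independently coded canonical-form procedure. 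Once these validations pass, the tabulated output — exactly one equivalence class in every case — is the proof.
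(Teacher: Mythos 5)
Your proposal is essentially the paper's own proof: Theorem \ref{enum.thm} is justified there only by an exhaustive computer enumeration with canonical-form/isomorph rejection, using the technique of \cite{KPS} adapted to $D_n$, which is exactly the computation you describe. The reductions you add --- normalizing $e\in A$, the duality $(A,B)\mapsto(B^{-1},A^{-1})$ between $(k,\ell)$- and $(\ell,k)$-near-factorizations, the rotation-count split from Theorem \ref{size.thm}, and the exact-cover formulation for the translates $Ab$ --- are all correct and are standard ingredients of that method.
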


\begin{remark}
The solutions mentioned in Theorem \ref{enum.thm} are of course equivalent to the ones that can also be obtained from \cite[Example 2]{CGHK}.
\end{remark}

\section{Near-factorizations in dihedral groups derived from near-factorizations in cyclic groups}
\label{pech.sec}

For some values of $n > 32$, nonequivalent near-factorizations of $D_n$ exist. We will construct  nonequivalent near-factorizations of $D_{41}$  and $D_{95}$ in Sections \ref{D41.sec} and \ref{D95.sec}, resp. These constructions use a technique described in \cite{Pech}.

Suppose that $n$ is odd; then $\zed_{2n}$ is isomorphic to $\zed_n \times \zed_2$. It is shown in \cite[Corollary 1]{Pech} that a symmetric near-factorization in 
$\zed_{2n}$ for $n$ odd gives rise to a strongly symmetric near-factorization in $D_n$. 
We will call this the \emph{P\^{e}cher transform}, which we describe now. 

\begin{enumerate}
\item Suppose $(A,B)$ is a symmetric near-factorization of $Z_{2n}$ where $n$ is odd.
\item For every $x \in \zed_{2n}$, define $\phi(x) = (x \bmod 2, x \bmod n)$. It is clear that 
$\phi : \zed_{2n} \rightarrow \zed_2 \times \zed_n$ is a group isomorphism.
Applying $\phi$ to $(A,B)$, we obtain the symmetric near-factorization 
$(A^*,B^*) = (\phi(A), \phi(B))$ in  $\zed_2 \times \zed_n$. 
\item For every $(i,j) \in \zed_2 \times \zed_n$, define $\psi(i,j) = a^ib^j$, where $a$ and $b$ are generators of 
$D_n$ as defined in (\ref{dihedral.eq}) (of course $\psi$ is not a group isomorphism). Thus $\psi : \zed_2 \times \zed_n \rightarrow D_n$. Applying $\psi$ to  $(\phi(A), \phi(B))$, we obtain $(A',B')$, which turns out to be a strongly symmetric near-factorization of $D_n$. 
\end{enumerate}

Figure \ref{pecher.fig} illustrates the steps in the P\^{e}cher transform.
\begin{figure}
\[
\begin{array}{ccccc}
\zed_{2n} & \RAR{.75in}{\phi} & \zed_2 \times \zed_n & \RAR{.75in}{\psi} & D_n
\\
x & \RAR{.75in}{} &  (x \bmod 2, x \bmod n) = (i,j)
& \RAR{.75in}{} & a^ib^j
\\
(A,B) & \RAR{.75in}{} & (A^*,B^*) & \RAR{.75in}{} & (A',B')
\end{array}
\]
\caption{The P\^{e}cher transform}
\label{pecher.fig}
\end{figure}

\begin{example}
$A = \{0,1,9\}$ and $B = \{2,5,8\}$ form a symmetric near-factorization of $\zed_{10}$.
Applying $\phi$, the associated (symmetric) near-factorization of $\zed_{2} \times \zed_5$ is
\[(A^*,B^*) = (\{(0,0),(1,1), (1,4)\}, \{(0,2),(1,0), (0,3)\}).\]
Then we use $\psi$ to obtain the strongly symmetric near-factorization 
\[(A',B') = \left(\left\{e,ab, ab^4\right\}, \left\{b^2,a, b^3\right\}\right)\] of $D_5$.
$\hfill\blacksquare$
\end{example}

We review the proof from \cite{Pech} that $(A',B')$, is a near-factorization of $D_n$. First consider an element $b^j \in D_n$. We want to express $b^j$ as a product of an element from $A'$ and an element from $B'$. In the near-factorization $(A^*,B^*)$, the value $(0,j)$ is obtained as a sum in one of the two following ways:
\begin{enumerate}
\item $(0,j) = (0,k) + (0,\ell)$, where $j = k + \ell$, $(0,k) \in \phi(A)$ and $ (0,\ell) \in \phi(B)$, or
\item $(0,j) = (1,k) + (1,\ell)$, where $j = k + \ell$, $(1,k) \in \phi(A)$ and $ (1,\ell) \in \phi(B)$.
\end{enumerate}
In the first case, we have $b^k \in A'$ and $b^{\ell} \in B'$, and $b^k b^{\ell} = b^j$.
In the second case, we have $ab^k \in A'$ and $ab^{\ell} \in B'$. Because  $\phi(A)$ is symmetric, we also have
$(1,-k) \in \phi(A)$ and hence $ab^{-k} \in A'$. Then we obtain the product
$(ab^{-k}) (ab^{\ell}) = b^{k + \ell} = b^j$.

Now we look at elements of the form $ab^j$. In the near-factorization $(A^*,B^*)$, the value $(1,j)$ is obtained in one of the two following ways:
\begin{enumerate}
\item $(1,j) = (0,k) + (1,\ell)$, where $(0,k) \in \phi(A)$ and $ (1,\ell) \in \phi(B)$, or
\item $(1,j) = (1,k) + (0,\ell)$, where $(1,k) \in \phi(A)$ and $ (0,\ell) \in \phi(B)$.
\end{enumerate}
In the first case, we have $b^k \in A'$ and $ab^{\ell} \in B'$.
Because  $\phi(A)$ is symmetric, we also have
$(0,-k) \in \phi(A)$ and hence $b^{-k} \in A'$.
Then we obtain the product
$(b^{-k}) (ab^{\ell}) = a b^{k + \ell} = ab^j$.
In the second case, we have $ab^k \in A'$ and $b^{\ell} \in B'$.  Then we obtain the product
$(ab^{k}) (b^{\ell}) = ab^{k+\ell} = ab^{j}$.

Because $D_{n} - e \subseteq A'B'$ and $|A'| \times |B'| = |A| \times |B| = 2n-1$, we conclude that 
$D_{n} - e = A'B'$ and therefore $(A',B')$ is a near-factorization of $D_n$. 

Finally, it is obvious that $(A',B')$ is strongly symmetric. The 
two elements $(i,j)$ and $(i,-j)$ in $A^*$ or in $B^*$ ($j \neq 0$) are mapped to
$a^ib^j$ and $a^ib^{-j}$. Also, an element $(i,0)$ ($i = 0,1)$ is mapped to $a^i$.

Therefore we have established the following theorem.
\begin{theorem} 
\label{Pech.thm}
Suppose $(A,B)$ is a symmetric near-factorization of $\zed_{2n}$, where $n$ is odd.
Let $(A',B')$ be obtained from $(A,B)$ by applying the P\^{e}cher transform. Then $(A',B')$ is a 
strongly symmetric near-factorization of $D_{n}$.
\end{theorem}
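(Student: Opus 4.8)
The plan is to verify the three asserted properties of $(A',B')$ in turn: that $|A'| \times |B'| = 2n-1$, that $D_n - e = A'B'$, and that $A'$ and $B'$ are strongly symmetric. The cardinality claim is immediate since $\phi$ is a bijection and $\psi$ is a bijection (as a set map $\zed_2 \times \zed_n \to D_n$), so $|A'| = |A|$ and $|B'| = |B|$, and $(A,B)$ being a near-factorization of $\zed_{2n}$ gives $|A| \times |B| = 2n-1$.

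\medskip

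For the covering property, I would argue that $D_n - e \subseteq A'B'$ by a case analysis on whether the target element is a rotation $b^j$ (with $j \not\equiv 0$) or a reflection $ab^j$, exactly mirroring the discussion preceding the theorem. Writing $A^* = \phi(A)$, $B^* = \phi(B)$, the hypothesis says every nonzero $(i,j) \in \zed_2 \times \zed_n$ is obtained exactly once as a sum of an element of $A^*$ and an element of $B^*$. For a rotation $b^j$, the pair $(0,j)$ arises either as $(0,k)+(0,\ell)$ or as $(1,k)+(1,\ell)$ with $k + \ell \equiv j$; in the first case $b^k b^\ell = b^j$ directly, and in the second case I use that $A^*$ is symmetric, so $(1,-k) \in A^*$, giving $ab^{-k} \in A'$ and $(ab^{-k})(ab^\ell) = b^{k+\ell} = b^j$ by the dihedral multiplication rule $(ab^i)(ab^j) = b^{j-i}$. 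For a reflection $ab^j$, the pair $(1,j)$ arises as $(0,k)+(1,\ell)$ or $(1,k)+(0,\ell)$; in the first subcase symmetry of $A^*$ gives $b^{-k} \in A'$ and $(b^{-k})(ab^\ell) = ab^{k+\ell}$ via $b^i(ab^j) = ab^{j-i}$, and in the second subcase $(ab^k)(b^\ell) = ab^{k+\ell}$ directly. Once $D_n - e \subseteq A'B'$ is established, equality follows from the cardinality count: $|A'B'| \le |A'||B'| = 2n-1 = |D_n - e|$.

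\medskip

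For strong symmetry, I would note that $A$ symmetric in $\zed_{2n}$ translates, under the isomorphism $\phi$, to $A^* = -A^*$ in $\zed_2 \times \zed_n$, i.e.\ $(i,j) \in A^*$ iff $(-i,-j) = (i,-j) \in A^*$ (since $-i = i$ in $\zed_2$); applying $\psi$, this says $a^i b^j \in A'$ iff $a^i b^{-j} \in A'$, which is precisely the definition of strong symmetry for $A'$. The same argument applies to $B'$. The element $(i,0)$ is fixed by negation and maps to $a^i$, so the $j = 0$ case is consistent.

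\medskip

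The main obstacle — really the only subtle point — is being careful with the symmetry argument in the covering step: one must observe that applying the symmetry of $A^*$ (not $B^*$) is what converts the "wrong-looking" products $(ab^k)(ab^\ell)$ and $(b^k)(ab^\ell)$ into the form needed, because the dihedral multiplication formulas involve $\ell - k$ rather than $\ell + k$ on the reflection factors. Everything else is a routine unwinding of the two maps $\phi$ and $\psi$ and the dihedral multiplication table, so I would keep the exposition brief and lean on the case analysis already carried out in the text above the theorem statement.
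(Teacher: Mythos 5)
Your proof is correct and follows essentially the same route as the paper's own argument: the identical case analysis on rotations versus reflections, the same use of the symmetry of $A^*$ to repair the products whose dihedral multiplication rule involves $\ell-k$, and the same cardinality count to upgrade containment to equality. No gaps.
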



We now show that, if $(A,B)$ and $(C,D)$ are equivalent {symmetric} near-factorizations of $\zed_{2n}$ (where $n$ is odd), then  
$(A',B')$ and $(C',D')$ are equivalent {strongly symmetric} near-factorizations of $D_n$. From Theorem \ref{Pech.thm}, we know that $(A',B')$ and $(C',D')$ are both {strongly symmetric} near-factorizations of $D_n$; we just need to show that they are equivalent.

We first state and prove a numerical lemma.

\begin{lemma}
\label{numerical.lem}
Suppose $(A,B)$ and $(C,D)$ are   equivalent {symmetric} near-factorizations of $\zed_{2n}$ where $n$ is odd.
Suppose also that  $C = rA+h$ and $D = rB - h$.
Then $\GCD(r,2n) = 1$ and $h = 0$ or $n$.
\end{lemma}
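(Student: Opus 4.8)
The plan is to dispatch the two assertions separately. For $\GCD(r,2n)=1$: by definition, $(A,B)$ and $(C,D)$ being equivalent near-factorizations of $(\zed_{2n},+)$ means there exist an automorphism $f\in\Aut(\zed_{2n})$ and an element $h\in\zed_{2n}$ with $C=f(A)+h$ and $D=-h+f(B)$; since every automorphism of $\zed_{2n}$ has the form $x\mapsto rx$ with $r\in\zed_{2n}^*$, the multiplier $r$ in $C=rA+h$, $D=rB-h$ is a unit. (If one instead reads the hypothesis as merely asserting the existence of an integer $r$ with $C=rA+h$, one can still recover $\GCD(r,2n)=1$, using the fact that in a nontrivial near-factorization $A$ cannot lie inside a coset of a proper subgroup of $\zed_{2n}$.) So the real content is that $h\in\{0,n\}$, which I would obtain from symmetry together with the defining equation of a near-factorization.

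First I would show that $A=A+t$, where $t:=2hr^{-1}\bmod 2n$. Using $A=-A$ we have $-rA=r(-A)=rA$, so
\[
-C=-(rA+h)=-rA-h=rA-h .
\]
Since $C$ is symmetric, $C=-C$, and since $C=rA+h$ this gives $rA+h=rA-h$, hence $rA=rA+2h$; that is, $rA$ is fixed by translation by $2h$. Applying the bijection $x\mapsto r^{-1}x$, which is legitimate because $r\in\zed_{2n}^*$, yields $A=A+2hr^{-1}$, i.e.\ $A=A+t$. (The identical computation with $B,D$ in place of $A,C$ gives $B=B+t$, using $B=-B$ and $D=-D$, but I will only need the statement for $A$.)

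Now I would feed this invariance into the near-factorization identity $A+B=\zed_{2n}\setminus\{0\}$. From $A=A+t$ we get
\[
(A+B)+t=(A+t)+B=A+B ,
\]
so $\zed_{2n}\setminus\{0\}$ is invariant under translation by $t$. In particular $0\notin(\zed_{2n}\setminus\{0\})+t$, so $g+t\ne 0$ for every $g\ne 0$; since $-g$ ranges over all of $\zed_{2n}\setminus\{0\}$, this forces $t=0$. Hence $2hr^{-1}\equiv 0\pmod{2n}$, and multiplying by the unit $r$ gives $2h\equiv 0\pmod{2n}$, so $n\mid h$ and therefore $h=0$ or $h=n$.

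This proof is short and essentially obstacle-free; the two points deserving a word of care are the justification that the identity is the only translation stabilising $\zed_{2n}\setminus\{0\}$, and the passage from $t\equiv 0$ to $n\mid h$, which genuinely uses that $r$ is invertible modulo $2n$. It is also worth noting that the argument invokes only the symmetry of $A$ and $C$, not that of $B$ and $D$, although the lemma hypothesises all four.
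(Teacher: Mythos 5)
Your proof is correct, but it follows a genuinely different route from the paper's. The paper argues via element sums: it first locates $0$ and $n$ (the two fixed points of negation) one each in $A$ and $B$, notes that the sum of the elements of any symmetric subset of $\zed_{2n}$ is $\equiv 0 \pmod n$, computes $\sum_{x\in C}x \equiv |A|h \pmod{2n}$, and concludes $h\equiv 0\pmod n$ from $|A|h\equiv 0 \pmod n$ together with $\GCD(|A|,n)=1$ (which the authors explicitly flag as ``essential'' to their proof). You instead extract from the symmetry of $A$ and $C$ the translation-invariance $A=A+t$ with $t=2hr^{-1}$, feed it through $A+B=\zed_{2n}\setminus\{0\}$ to show the stabiliser of $\zed_{2n}\setminus\{0\}$ under translation is trivial, and get $2h\equiv 0\pmod{2n}$ directly. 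Your argument bypasses both the sum computation and the coprimality of $|A|$ with $n$, at the cost of invoking the full near-factorization identity rather than just symmetry; the paper's sum method has the advantage that essentially the same computation is reused in its Lemma on symmetric subsets of $\zed_n$ (where no near-factorization structure is available). One small caveat: your parenthetical claim that $\GCD(r,2n)=1$ could be recovered even if $r$ were an arbitrary integer, ``because $A$ cannot lie inside a coset of a proper subgroup,'' is not justified as stated and a quick coset-counting attempt does not obviously yield a contradiction; fortunately it is immaterial, since the intended reading (as in the paper) is that $x\mapsto rx$ is the automorphism realising the equivalence, whence $r\in\zed_{2n}^{*}$ by definition.
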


\begin{proof} The mapping $x \mapsto rx$ must be an automorphism of $\zed_{2n}$, so $\GCD(r,2n) = 1$.
Hence, it suffices to prove that  $h = 0$ or $n$.
 For $x,y \in \zed_{2n}$, define $x \sim y$ if $x = \pm y$. There are two equivalence classes of size one, namely
 $0$ and $n$, and all other equivalence classes have size two and consist of $\{x, -x\}$, where $x \neq 0,n$.
 
Recall that $|A|$ and $|B|$ are both odd, since $|A| \times |B| = 2n-1$. (Also, $|C| = |A|$ and $|D| = |B|$.)
Because $A$ is symmetric, it is a union of equivalence classes.  $|A|$ is odd and there are two equivalence classes of size one, so $A$ contains exactly one equivalence class of size one. Similarly, $B$ contains exactly one equivalence class of size one. 
Because $|A|$ and $|B|$ are disjoint and both of them are symmetric, it follows that
$0 \in A$ and $n \in B$, or vice versa.

We have $C = rA+h$ and $D = rB-h$. Clearly $rA$ and $rB$ are each a union of equivalence classes, because the function $x \mapsto rx$ maps equivalence classes to equivalence classes. So we have $0 \in rA$ and $n \in rB$, or vice versa. 

Without loss of generality, suppose $0 \in rA$ (so $n \in rB$). Then it is clear that 
\[ \sum_{x \in rA} x = 0 \quad \text{and} \quad \sum_{x \in rB} x = n.\]
Now consider \[S = \sum_{x \in C} x.\] The set $C$ is symmetric, so we have $S \equiv 0 \text{ or } n \bmod 2n$. In either case, $S \equiv 0 \bmod n$.
However, it is also the case that 
\begin{align*}
S &\equiv \sum_{x \in rA} (x+h) \bmod 2n\\
&\equiv \sum_{x \in rA} x + \sum_{x \in rA} h \bmod 2n\\
&\equiv |A|h \bmod 2n.
\end{align*} Therefore, $|A|h \equiv 0 \bmod n$.
From the relation $|A| \times |B| = 2n-1$, it follows that $\GCD(|A|,n) = 1$. Hence, $h \equiv 0 \bmod n$ and therefore $h = 0$ or $n$.
\end{proof}

\begin{remark}
The fact that $\GCD(|A|,n) = 1$ is essential to the proof of Lemma \ref{numerical.lem}. For, consider the set $A = \{ 0,2,12,4,10,6,8\}$, which is a symmetric subset of $\zed_{14}$. If we take $h = 5$, then $A+h = \{ 7, 1,13,3,11,5,9\}$ is also symmetric. So we have a translate of a symmetric subset by a value $h \neq n \text{ or } 0$ modulo $2n$, which also yields a symmetric subset. Here $|A| = 7$, so $\GCD(|A|,2n) = 7 > 1$, and of course there is no contradiction to Lemma \ref{numerical.lem}.
\end{remark}

We will also make use of the following similar lemma later in this section.
\begin{lemma}
\label{lem:var}
Suppose $A \subseteq \zed_n$  is {symmetric}, where $n$ is odd. Suppose $C = rA+h$ where $\GCD(r,n) = 1$ and $h \in \zed_n$. If $C$ is symmetric, then
$h$ is a multiple of $n / \GCD(|A|,n)$.
\end{lemma}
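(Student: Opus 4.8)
The plan is to mimic the summation argument used in the proof of Lemma~\ref{numerical.lem}, but now working over $\zed_n$ with $n$ odd, where $0$ is the only element fixed by negation. First I would record the key structural fact: since $n$ is odd, the equivalence relation $x \sim y \iff x = \pm y$ on $\zed_n$ has exactly one class of size one, namely $\{0\}$, and all other classes have size two. Because $A$ is symmetric it is a union of such classes; since $\GCD(r,n)=1$, the map $x \mapsto rx$ permutes $\zed_n$ and sends $\pm x$ to $\pm rx$, so $rA$ is also a union of classes, hence symmetric. If $C = rA+h$ is symmetric, it too is a union of classes.

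Next I would compute $\sum_{x \in C} x \bmod n$ two ways. On one hand, any symmetric subset $S$ of $\zed_n$ with $n$ odd satisfies $\sum_{x \in S} x \equiv 0 \bmod n$, because the size-two classes $\{x,-x\}$ each contribute $0$ and the only possible size-one class $\{0\}$ contributes $0$; applying this to $S = C$ gives $\sum_{x \in C} x \equiv 0 \bmod n$. On the other hand,
\[
\sum_{x \in C} x = \sum_{x \in rA}(x + h) = \sum_{x \in rA} x + |A|\, h \equiv |A|\,h \bmod n,
\]
using that $\sum_{x \in rA} x \equiv 0 \bmod n$ since $rA$ is symmetric. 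Combining the two computations yields $|A|\,h \equiv 0 \bmod n$.

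Finally I would translate the divisibility $n \mid |A|\,h$ into the stated conclusion. Write $d = \GCD(|A|,n)$ and $|A| = d\,m$, $n = d\,n'$ with $\GCD(m,n') = 1$. Then $d n' \mid d m h$, i.e.\ $n' \mid m h$, and since $\GCD(m,n')=1$ this forces $n' \mid h$; that is, $h$ is a multiple of $n' = n/\GCD(|A|,n)$, as required. I do not anticipate a genuine obstacle here: the only point needing a little care is the opening reduction — confirming that symmetry of $A$ together with $\GCD(r,n)=1$ really does make $rA$ (and hence $C$) a union of $\pm$-classes, so that the ``sum of a symmetric set is $0$'' identity is legitimately applicable to $C$. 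Everything after that is the elementary number-theoretic manipulation above.
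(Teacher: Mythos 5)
Your proposal is correct and follows essentially the same argument as the paper: both use the $\pm$-equivalence classes on $\zed_n$ (with $\{0\}$ the unique singleton class since $n$ is odd) to conclude that any symmetric subset sums to $0$ modulo $n$, then compute $\sum_{x\in C} x$ two ways to obtain $|A|h \equiv 0 \bmod n$. The only difference is that you spell out the final divisibility step ($n \mid |A|h \Rightarrow (n/\GCD(|A|,n)) \mid h$) in more detail than the paper, which simply asserts it.
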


\begin{proof}
For $x,y \in \zed_{n}$, define $x \sim y$ if $x = \pm y$. Because $n$ is odd, there is one equivalence class of size one (namely, $0$), and all other equivalence classes have size two and consist of $\{x, -x\}$, where $x \neq 0$. If follows easily that the sum of the elements in a symmetric subset is $0$ modulo $n$. Hence, the sum of the elements in $A$ is $0$ modulo $n$, as is the sum of the elements in $rA$.

Define \[S = \sum_{x \in C} x.\] The set $C$ is symmetric by assumption, so we have $S\equiv 0 \bmod n$. On the other hand, 
\begin{align*}
S &\equiv \sum_{x \in rA} (x+h) \bmod n\\
&\equiv \sum_{x \in rA} x + \sum_{x \in rA} h \bmod n\\
&\equiv |A|h \bmod n.
\end{align*}
Therefore 
\[ |A|h \equiv 0 \bmod n\]
and hence $h$ is a multiple of $n / \GCD(|A|,n)$.
\end{proof}

\begin{remark}
If $\GCD(|A|,n) = 1$ in Lemma \ref{lem:var}, then it follows that $h = 0$.
\end{remark}

We now prove the equivalence of $(A',B')$ and $(C',D')$, which are obtained from $(A,B)$ and $(C,D)$ using the  P\^{e}cher transform.
Because $\phi$ is a group isomorphism, it is clear that $(A^*,B^*)$ and $(C^*,D^*)$ are equivalent.
From Lemma \ref{numerical.lem}, we $h = 0$ or $n$. Therefore  $\phi(h) = (1,0) \text{ or } (0,0)$, so $-\phi(h) = \phi(h)$. 
Then 
\begin{equation}
\label{C*.eq}
C^*= f(A^*) + \phi(h) \quad \text{and} \quad D^* = \phi(h) + f(B^*),
\end{equation} where
$f \in \Aut(\zed_{2} \times \zed_n)$ and $\phi(h) = (1,0) \text{ or } (0,0)$.
Note that $f$ has the form $f(i,j) = (i,rj)$ where $\GCD (r,2n)=1$.

Define $f' = f_{r,0} \in \Aut(D_n)$.
First, from the following commutative diagram,
we see that $\psi \circ f' = f \circ \psi$:

\begin{center}
\begin{tikzcd}
(i,j) \arrow[r, "\psi"] \arrow[d, "f"]
& a^ib^j \arrow[d, "f' = f_{r,0}" ] \\
(i,rj) \arrow[r,  "\psi" ]
&  a^ib^{rj}
\end{tikzcd}
\end{center}

Then 
\begin{align*}
f'(A') &= f_{r,0}(\psi(A^*)) \\
&= \psi (f(A^*)). 
\end{align*}

Recall that $\phi(h) = (1,0) \text{ or } (0,0)$.  First, suppose $\phi(h) = (1,0)$.
Consider  $(i,j) \in A^*$. Because $A^*$ is symmetric, we have $(i,-j) \in A^*$.
Then, from (\ref{C*.eq}), we have 
\begin{equation}
\label{CC*.eq}
\{ (i+1, rj), (i+1, -rj)\} \subseteq C^*.
\end{equation}

Corresponding to the elements $(i,j), (i,-j) \in A^*$, we have $\psi(i,j), \psi(i,-j) \in A'$.
Define $h' = \psi(h)$; then $h' = a$. 
We have
\begin{align}
\left\{f_{r,0}(\psi(i,j)), f_{r,0}(\psi(i,-j))\right\}h' &= \left\{ a^ib^{rj}, a^ib^{-rj}\right\}a \nonumber\\
&= \left\{ a^{i+1}b^{-rj }, a^{i+1}b^{ rj}\right\}. \label{111.eq}
\end{align}

The  elements in $C^*$ were given in (\ref{CC*.eq}).
Applying $\psi$, the corresponding elements in $C'$ are
\begin{align*}
\{\psi (i+1, rj), \psi(i+1, -rj)\} &= \{a^{i+1}b^{rj}, a^{i+1}b^{-rj} \},
\end{align*}
which agrees with (\ref{111.eq}). So we have proven that $C'= f'(A')  h'$.

\bigskip

The argument that $D'= (h')^{-1}f'(B')$ is similar. Define $f$, $f'$, $h$ and $h'$ as before.
Note that  $(h')^{-1} = a = h'$.

Consider  $(i,j) \in B^*$. Because $B^*$ is symmetric, we have $(i,-j) \in B^*$.
Then, from (\ref{C*.eq}), we have 
\begin{equation}
\label{DD*.eq}\{ (i+1, rj), (i+1, -rj)\} \subseteq D^*.
\end{equation}

Corresponding to the elements $(i,j), (i,-j) \in B^*$, we have $\psi(i,j), \psi(i,-j) \in B'$.
We have
\begin{align}
a\left\{f_{r,0}(\psi(i,j)), f_{r,0}(\psi(i,-j))\right\} &= a\left\{ a^ib^{rj}, a^ib^{-rj}\right\}\nonumber\\
&= \left\{ a^{i+1}b^{rj}, a^{i+1}b^{-rj}\right\}\label{112.eq}.
\end{align}

The  elements in $D^*$ were given in (\ref{DD*.eq}). Applying $\psi$, the corresponding elements in $D'$ are
\begin{align*}
\{\psi (i+1, rj), \psi(i+1, -rj)\} &= \{a^{i+1}b^{rj}, a^{i+1}b^{-rj} \},
\end{align*}
which agrees with (\ref{112.eq}). So we have proven that $C'= f'(A')  h'$.

The other possible case, where $h= (0,0)$ and $h' = e$, is similar. The details are left for the reader to verify.
Summarizing, we have proven the following.
\begin{theorem}
\label{equiv1.thm}
 Suppose that $(A,B)$ and $(C,D)$ are equivalent symmetric near-factorizations of $\zed_{2n}$, where $n$ is odd. Let $(A',B')$ and $(C',D')$ be obtained from  $(A,B)$ and $(C,D)$ by applying the P\^{e}cher transform.
Then  $(A',B')$ and $(C',D')$ are equivalent strongly symmetric near-factorizations of $D_{n}$.
\end{theorem}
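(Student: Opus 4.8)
The plan is to reduce everything to the structure already established for the P\^echer transform together with Lemma \ref{numerical.lem}. By Theorem \ref{Pech.thm}, both $(A',B')$ and $(C',D')$ are strongly symmetric near-factorizations of $D_n$, so the only remaining task is to exhibit an automorphism $f' \in \Aut(D_n)$ and an element $h' \in D_n$ with $C' = f'(A')h'$ and $D' = (h')^{-1}f'(B')$.

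First I would unwind the hypothesis. Equivalence of $(A,B)$ and $(C,D)$ in $\zed_{2n}$ means $C = rA + h$ and $D = rB - h$ for some $r \in \zed_{2n}^{*}$ and $h \in \zed_{2n}$, and Lemma \ref{numerical.lem} then forces $h \in \{0,n\}$. Applying the isomorphism $\phi$ carries this to $C^{*} = f(A^{*}) + \phi(h)$ and $D^{*} = -\phi(h) + f(B^{*})$ in $\zed_2 \times \zed_n$, where $f(i,j) = (i,rj)$ is an automorphism and $\phi(h) \in \{(0,0),(1,0)\}$. The role of Lemma \ref{numerical.lem} here is precisely that $\phi(h)$ has order at most $2$, so $-\phi(h) = \phi(h)$ and both translations can be written with the same sign.

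Next I would set $f' = f_{r,0} \in \Aut(D_n)$ and $h' = \psi(\phi(h)) \in \{e,a\}$, noting $(h')^{-1} = h'$ in either case, and verify the pointwise identity $\psi(f(i,j)) = f'(\psi(i,j))$, i.e.\ $a^i b^{rj} = f_{r,0}(a^i b^j)$, which is immediate from the definition of $f_{r,0}$. The crux is then to check $C' = f'(A')h'$ and $D' = (h')^{-1}f'(B')$ by working with pairs $\{(i,j),(i,-j)\}$ inside the symmetric sets $A^{*}, B^{*}$. When $h' = a$, multiplying a reflection $a^i b^{rj}$ on the right by $a$ yields $a^{i+1} b^{-rj}$; hence $\{f'(\psi(i,j)), f'(\psi(i,-j))\}\,h' = \{a^{i+1}b^{-rj}, a^{i+1}b^{rj}\}$, and this set equals $\{\psi(i+1,rj), \psi(i+1,-rj)\}$, which is exactly the image under $\psi$ of the pair $\{f(i,j)+\phi(h),\ f(i,-j)+\phi(h)\} \subseteq C^{*}$. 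Summing over all such pairs (together with the fixed-point elements $(i,0)$, whose images $a^i$ behave trivially) gives $f'(A')h' = C'$, and the analogous computation multiplying on the left gives $(h')^{-1}f'(B') = D'$. The case $h' = e$ is the same argument with no sign reversal.

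The main obstacle I anticipate is that $\psi$ is only a bijection, not a group homomorphism: additive translation in $\zed_2 \times \zed_n$ by $\phi(h) = (1,0)$ corresponds on the $D_n$ side to multiplication by a reflection, which reverses exponents on reflections. So the correspondence between $C^{*}$ and $C'$ cannot be verified element by element; it survives only because strong symmetry of $A'$ and $B'$ (equivalently, symmetry of $A^{*}$ and $B^{*}$) guarantees that each coset slice of $A'$ and of $B'$ is closed under $b^m \mapsto b^{-m}$, so the exponent reversal merely permutes elements within $A'$ (resp.\ $B'$). Carrying out this sign bookkeeping carefully, and confirming that the diagonal elements $a^i$ cause no trouble, is the only real content beyond assembling the earlier lemmas.
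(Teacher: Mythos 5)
Your proposal is correct and follows essentially the same route as the paper's proof: invoke Lemma \ref{numerical.lem} to force $h \in \{0,n\}$, set $f' = f_{r,0}$ and $h' = \psi(\phi(h)) \in \{e,a\}$, verify the commutation $\psi \circ f = f_{r,0} \circ \psi$, and then match $C'$ with $f'(A')h'$ (and $D'$ with $(h')^{-1}f'(B')$) pairwise on the symmetric pairs $\{(i,j),(i,-j)\}$, using exactly the sign-reversal observation for right multiplication by $a$ that the paper records in its equation (\ref{111.eq}).
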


We now consider the inverse P\^{e}cher transform. We basically reverse the operations depicted in Figure \ref{pecher.fig}. Suppose we start with a strongly symmetric near-factorization of $D_{n}$, say $(A',B')$. 
As before, we assume that $n$ is odd.
First we apply $\psi^{-1}$, which replaces every element $a^ib^j \in D_n$ with  $(i,j) \in \zed_2 \times \zed_n$. Then we apply $\phi^{-1}$ to obtain an element $x \in \zed_{2n}$. The mapping $\phi^{-1}$ is just an application of the Chinese remainder theorem. That is, $\phi^{-1}(i,j) = n   d_2 \, i + 2  d_1 \, j$, where $d_1$ and $d_2$ are integers such that $2d_1 + n d_2 = 1$. (Note that $d_1$ and $d_2$ can be obtained from the Euclidean algorithm; it is easily verified that $d_1=(n+1)/2$ and $d_2=-1$.) Of course $\phi^{-1}$ is an isomorphism from $\zed_2 \times \zed_n$ to $\zed_{2n}$. It is convenient  just to consider the image of  $(A',B')$ under $\psi^{-1}$, which we denote, as before, by $(A^*, B^*)$.

We want to show that $(A^*, B^*)$ is a near-factorization. First, consider an element $(0,j) \in \zed_2 \times \zed_n$. We want to express $(0,j)$ as a sum of an element in $A^*$ and an element in $B^*$.
The value $b^j$ occurs as a product of an element from $A'$ and an element from $B'$. There are two ways in which this can happen:
\begin{enumerate}
\item $b^j = b^k b^{\ell}$, where $b^k \in A'$, $b^{\ell} \in B'$ and $k + \ell = j$, or
\item $b^j = (ab^k) (ab^{\ell})$, where $ab^k \in A'$, $ab^{\ell} \in B'$ and $\ell - k = j$.
\end{enumerate}
In the first case, we have $(0,k)\in A^*$ and $(0,\ell) \in B$, and $(0,j) = (0,k)+(0,\ell)$.
In the second case, we make use of the fact that $(A^*, B^*)$ is strongly symmetric. Because  $ab^k \in A'$, we also have $ab^{-k} \in A'$. Therefore $(1,-k)\in A^*$ and $(1,\ell) \in B^*$, and $(0,j) = (1,-k)+(1,\ell)$.

Now we look at elements $(1,j) \in \zed_2 \times \zed_n$. 
The value $ab^j$ occurs as a product of an element from $A'$ and an element from $B'$. There are two ways in which this can happen:
\begin{enumerate}
\item $ab^j = (ab^k) b^{\ell}$, where $ab^k \in A'$, $b^{\ell} \in B'$ and $k + \ell = j$, or
\item $ab^j = b^k (ab^{\ell})$, where $b^k \in A'$, $ab^{\ell} \in B'$ and $\ell - k = j$.
\end{enumerate}
In the first case, we have $(1,k)\in A^*$ and $(0,\ell) \in B$, and $(1,j) = (1,k)+(0,\ell)$.
In the second case, we make use of the fact that $(A^*, B^*)$ is strongly symmetric. Because  $b^k \in A'$, we also have $b^{-k} \in A'$. Therefore $(0,-k)\in A^*$ and $(1,\ell) \in B^*$, and $(1,j) = (0,-k)+(1,\ell)$.

Because $D_{n} - e \subseteq A^*B^*$ and $|A^*| \times |B^*| = |A'| \times |B'| = 2n-1$, we conclude that 
$D_{n} - e = A^*B^*$ and therefore $(A^*,B^*)$ is a near-factorization of $D_n$. 

Finally, it is obvious that $(A^*,B^*)$ is  symmetric. For $j \neq 0$, the
two elements $a^ib^j$ and $a^ib^{-j}$ in $A^*$ (or in $B^*$) ($j \neq 0$) are mapped by $\psi^{-1}$ to
$(i,j)$ and $(i,-j)$. Also, an element $a^i$ ($i = 0,1)$ is mapped to $(i,0)$.

Therefore we have established the following theorem, which we state in terms of $(A,B)$ and $(A',B')$.
\begin{theorem} 
\label{Pechinb.thm}
Suppose $(A',B')$ is a strongly symmetric near-factorization of $D_{n}$, where $n$ is odd.
Let $(A,B)$ be obtained from $(A',B')$ by applying the inverse P\^{e}cher transform. Then $(A,B)$ is a 
 symmetric near-factorization of $\zed_{2n}$.
\end{theorem}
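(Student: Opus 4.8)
The plan is to reduce the statement to a verification inside $\zed_2 \times \zed_n$. Since $\phi$ is a group isomorphism, both it and its inverse carry near-factorizations to near-factorizations and, because $\phi(-x) = -\phi(x)$, symmetric sets to symmetric sets. Hence it suffices to show that the intermediate pair $(A^*, B^*) = (\psi^{-1}(A'), \psi^{-1}(B'))$ is a \emph{symmetric near-factorization} of $\zed_2 \times \zed_n$; applying $\phi^{-1}$ then delivers the claimed $(A,B)$ in $\zed_{2n}$.

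For the cardinality condition, $\psi$ is a bijection, so $|A^*|\,|B^*| = |A'|\,|B'| = |D_n| - 1 = 2n - 1 = |\zed_2 \times \zed_n| - 1$. It remains to prove $A^* + B^* = (\zed_2 \times \zed_n) \setminus \{(0,0)\}$; since the right-hand side has $2n-1$ elements and $|A^*+B^*| \le |A^*|\,|B^*| = 2n-1$, it is enough to establish the inclusion $\supseteq$, i.e., to represent each nonzero element of $\zed_2 \times \zed_n$ as a sum of an element of $A^*$ and an element of $B^*$.

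This is the case analysis already carried out in the paragraphs preceding the statement, split according to whether the target element is $(0,j)$ with $j \neq 0$ or $(1,j)$. An element $(0,j)$, $j \neq 0$, corresponds to the nonidentity rotation $b^j$, which factors as $xy$ with $x \in A'$, $y \in B'$; as $b^j$ is a rotation, either $x = b^k, y = b^\ell$ with $k + \ell \equiv j \pmod n$, or $x = ab^k, y = ab^\ell$ with $\ell - k \equiv j \pmod n$. In the first subcase $(0,k) + (0,\ell) = (0,j)$; in the second, strong symmetry of $A'$ gives $ab^{-k} \in A'$, so $(1,-k) + (1,\ell) = (0, \ell - k) = (0,j)$. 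An element $(1,j)$ corresponds to the reflection $ab^j \neq e$, a product of a rotation and a reflection in one of the two orders, and the analogous split — invoking strong symmetry of $A'$ in the case $b^k \cdot ab^\ell$ — yields the representation. Finally, symmetry of $A^*$ and $B^*$ is immediate: in $\zed_2 \times \zed_n$ one has $-(i,j) = (i,-j)$ since $-i = i$ in $\zed_2$, so $A^* = -A^*$ says precisely that $a^i b^j \in A'$ iff $a^i b^{-j} \in A'$, which is exactly the strong symmetry of $A'$, and likewise for $B'$.

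There is no deep obstacle here; the argument is a bookkeeping exercise. The two points that require care are: (i) keeping track of which of $A'$, $B'$ the strong-symmetry hypothesis is applied to in each subcase (it is always $A'$ in this formulation, because the reflection/order-swap situations are always resolved by rewriting the $A'$-factor); and (ii) the identity $-(i,j) = (i,-j)$, which is the reason \emph{strong} symmetry in $D_n$ — rather than mere symmetry — is the right hypothesis to produce ordinary symmetry in $\zed_2 \times \zed_n$. Once these are pinned down, the identity $e = b^0$ is correctly the only excluded element, so $A^* + B^*$ misses exactly $(0,0)$ and the cardinality count forces equality.
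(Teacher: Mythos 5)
Your proposal is correct and follows essentially the same route as the paper: reduce via the isomorphism $\phi^{-1}$ to checking that $(A^*,B^*)=(\psi^{-1}(A'),\psi^{-1}(B'))$ is a symmetric near-factorization of $\zed_2\times\zed_n$, run the same four-subcase analysis (resolving the two order-swapped subcases by applying strong symmetry to the $A'$-factor), and finish with the cardinality count $|A^*|\,|B^*|=2n-1$. The closing observation that $-(i,j)=(i,-j)$ is exactly why strong symmetry is the right hypothesis matches the paper's concluding remark as well.
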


Finally, we show that
equivalent strongly symmetric near-factorizations of $D_{n}$ are mapped to
equivalent symmetric near-factorizations of $\zed_{2n}$ using the inverse P\^{e}cher transform, provided that certain numerical conditions are satisfied. First, we prove a useful lemma.

\begin{lemma}\label{lem:h1aj0}
Let   $C_n$ denote the subgroup of $n$ rotations in $D_{n}$ and
let $R$ be a subset of the $n$ reflections in $D_n$.  Then $R$ is strongly symmetric if and only if $aR$ is a symmetric subset of $C_n$.
\end{lemma}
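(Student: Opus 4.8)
The plan is to use left-multiplication by $a$ as an explicit bijection between the set of $n$ reflections of $D_n$ and the subgroup $C_n$ of $n$ rotations, and to check that this bijection carries the ``strongly symmetric'' condition on a set of reflections to the ``symmetric'' condition on the corresponding set of rotations. Write $R = \{ ab^j : j \in S \}$ for a subset $S \subseteq \zed_n$. Using the multiplication rules from Section~\ref{notation.sec}, in particular $(ab^i)(ab^j) = b^{j-i}$ with $i = 0$, we have $a\cdot(ab^j) = b^j$, so $aR = \{ b^j : j \in S \}$; this also shows $x \mapsto ax$ maps the reflections bijectively onto $C_n$, so $aR$ is genuinely a subset of $C_n$.

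The next step is to translate both sides of the claimed equivalence into the same condition on the index set $S$. Since $R$ consists only of reflections, the definition of ``strongly symmetric'' reduces to the statement that $ab^j \in R$ if and only if $ab^{-j} \in R$, i.e.\ that $S = -S$. On the other side, $(b^j)^{-1} = b^{-j}$, so $(aR)^{-1} = \{ b^{-j} : j \in S \} = \{ b^j : j \in -S \}$, and therefore $aR = (aR)^{-1}$ if and only if $S = -S$ as well. Comparing the two characterizations gives the lemma.

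If a coordinate-free write-up is preferred, one can instead observe that for a reflection $r$ the element $ara$ is again a reflection, with $ara = ab^{-j}$ when $r = ab^j$; thus ``$R$ strongly symmetric'' is precisely the statement that $R$ is closed under conjugation by $a$. Since any $x \in aR$ has the form $x = ar$ with $r \in R$ and then $x^{-1} = ra = a(ara)$, we get $(aR)^{-1} = a\,(aRa)$, so $(aR)^{-1} = aR$ if and only if $aRa = R$, which is the same condition. Either way the proof is a brief computation; the only thing requiring any care is bookkeeping with the signs in the dihedral multiplication rules and noticing that it is \emph{left}-multiplication by $a$ (not right-multiplication) that appears in the statement, so I do not expect a genuine obstacle here.
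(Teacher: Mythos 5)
Your proof is correct and takes essentially the same approach as the paper: both use left multiplication by $a$ as a bijection onto the rotations and translate the strong-symmetry condition $ab^j \in R \Leftrightarrow ab^{-j} \in R$ directly into the symmetry condition $aR = (aR)^{-1}$. Your coordinate-free variant via conjugation by $a$ is just a restatement of the same computation, so there is nothing further to flag.
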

\begin{proof}
Reflections in $D_n$ have the form $ab^i$ for some $i$ and hence elements of $aR$ have the form $a(ab^i)=b^i$.
Therefore the set $aR$ consists of rotations.  We observe that left multiplication by $a$ gives a bijection between elements of $R$ and elements of $aR$.  The set $aR$ is symmetric if and only if $b^{-i} \in aR$ whenever $b^i\in aR$.  In this case we have $ab^{-i}\in R$ whenever $ab^i\in R$, and conversely.
It follows that $R$ is strongly symmetric if and only if $aR$ is symmetric.  
\end{proof}

\begin{lemma}
\label{hj.lem}
Suppose $(A',B')$ is a strongly symmetric near-factorization of $D_{n}$ with $|A'|=k$ and $|B'|=\ell$.
Suppose further that the equivalent near-factorization $(C',D')$ given by $C'=f_{i,j}(A')h$ and $D'=h^{-1}f_{i,j}(B')$ is also strongly symmetric. Finally, suppose  $\GCD(n, (k+1)/2) = \GCD(n, (\ell+1)/2) = 1$. 
Then $n$, $k$ and $\ell$ are all odd integers, 
$h\in \{e,a\}$ and $j=0$. 
\end{lemma}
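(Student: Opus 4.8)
\emph{Proof plan.} The plan is to express strong symmetry of $C'$ and $D'$ as a collection of statements about symmetric subsets of $\zed_n$, and then to apply Lemma~\ref{lem:var}: its conclusion that a certain shift is a multiple of $n/\GCD(\cdot,n)$ collapses to ``the shift is $0$'' exactly when the relevant cardinality is coprime to $n$, which is precisely what the hypotheses $\GCD(n,(k+1)/2) = \GCD(n,(\ell+1)/2) = 1$ supply. Before that, I would settle the parity assertions. Since $k\ell = |D_n| - 1 = 2n-1$ is odd, $k$ and $\ell$ are odd, so $(k+1)/2$ and $(\ell+1)/2$ are integers; and if $n$ were even then $2n-1 \equiv 3 \pmod 4$, which together with the oddness of $k$ and $\ell$ forces exactly one of them, say $\ell$, to satisfy $\ell \equiv 3 \pmod 4$, making $(\ell+1)/2$ even, so that $2 \mid \GCD(n,(\ell+1)/2)$, contradicting the hypothesis. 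Hence $n$ is odd, and in particular Lemma~\ref{lem:var} and the rotation/reflection decomposition in $\zed_n$ become available.

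Next I would fix notation. Write $S_A = \{\, h : b^h \in A'\,\}$ and $T_A = \{\, h : ab^h \in A'\,\}$, and define $S_B, T_B$ from $B'$ in the same way; strong symmetry of $A'$ and $B'$ says precisely that $S_A, T_A, S_B, T_B$ are symmetric subsets of $\zed_n$ (this is essentially Lemma~\ref{lem:h1aj0}). One has $|S_A| + |T_A| = k$ and $|S_B| + |T_B| = \ell$, and by Theorem~\ref{size.thm} the pair $\{|S_A|, |T_A|\}$ equals $\{(k-1)/2, (k+1)/2\}$ while $\{|S_B|, |T_B|\}$ equals $\{(\ell-1)/2, (\ell+1)/2\}$, where in one of the two options of that theorem the value $(k+1)/2$ is attained by $|T_A|$ and $(\ell+1)/2$ by $|S_B|$, and in the other option $(k+1)/2$ is attained by $|S_A|$ and $(\ell+1)/2$ by $|T_B|$. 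Write $h = b^t$ or $h = ab^t$ with $0 \le t \le n-1$; the target is to prove $t = 0$ and $j = 0$.

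The core of the proof is a case split on whether $h$ is a rotation or a reflection. Using the dihedral product rules together with $f_{i,j}(a^e b^h) = a^e b^{je + ih}$, I would compute, as subsets of $\zed_n$, the exponent sets of the rotations and of the reflections appearing in $C' = f_{i,j}(A')h$ and in $D' = h^{-1} f_{i,j}(B')$. Each such set has the form $rX + c$, where $X$ is one of $S_A, T_A, S_B, T_B$, the multiplier $r \in \{i, -i\}$ is a unit of $\zed_n$, and $c$ is one of $t$, $-t$, $j + t$, $t - j$. For instance, when $h = b^t$ the rotations of $C'$ have exponent set $iS_A + t$ and its reflections have exponent set $iT_A + (j + t)$, while $D'$ contributes $iS_B - t$ and $iT_B + (j + t)$; when $h = ab^t$, rotations and reflections trade roles, so that (for example) $C'$ contributes $-iS_A + t$ as a set of reflections and $-iT_A + (t - j)$ as a set of rotations. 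Strong symmetry of $C'$ and of $D'$ makes every one of these sets symmetric in $\zed_n$, so Lemma~\ref{lem:var} forces the associated $c$ to be a multiple of $n/\GCD(|X|, n)$. Feeding in the cardinalities above: in either option of Theorem~\ref{size.thm}, one of $|S_A|, |S_B|$ is coprime to $n$, which forces $t \equiv 0 \pmod n$ and hence $t = 0$; and one of $|T_A|, |T_B|$ is coprime to $n$, which forces $j \pm t \equiv 0 \pmod n$ and hence $j = 0$. Finally $h = b^0 = e$ in the first case and $h = ab^0 = a$ in the second, so $h \in \{e, a\}$.

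I expect the main obstacle to be purely organizational: carrying out the four dihedral multiplications correctly in each of the two cases for $h$ (the sign reversals produced by $b^p(ab^q) = ab^{q-p}$ and $(ab^p)(ab^q) = b^{q-p}$ are easy to mishandle), and keeping track of which of the four cardinalities $|S_A|, |T_A|, |S_B|, |T_B|$ equals $(k+1)/2$ or $(\ell+1)/2$ in each of the two options of Theorem~\ref{size.thm}. Apart from that bookkeeping, the only substantive ingredient is Lemma~\ref{lem:var}.
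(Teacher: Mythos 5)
Your proposal is correct and follows essentially the same route as the paper: establish that $k,\ell$ are odd and that $n$ even would violate one of the GCD hypotheses, split on whether $h$ is a rotation or a reflection, and apply Lemma~\ref{lem:var} (via the rotation/reflection exponent sets, whose cardinalities come from Theorem~\ref{size.thm}) to force the relevant shifts $t$ and $j\pm t$ to vanish. The only difference is organizational — you translate all four exponent sets into $\zed_n$ at once and treat both options of Theorem~\ref{size.thm} explicitly, where the paper works with subsets of $D_n$ via Lemma~\ref{lem:h1aj0} and a without-loss-of-generality reduction — and your dihedral computations check out.
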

\begin{proof}
We observe that $k\ell+1=2n$ as $(A',B')$ is a near-factorization of $D_{n}$, so it follows that both $k$ and $\ell$ are odd.  

We note that $n$ is odd, since if $(k\ell+1)/2$ were even, then we would have $k\ell\equiv 3 \pmod{4}$.  Then either $k\equiv 1\pmod{4}$ and $\ell \equiv 3 \pmod{4}$, in which case $\GCD(n,(\ell+1)/2) \geq 2$; or $k\equiv 3 \pmod{4}$ and $\ell \equiv 1 \pmod{4}$, in which case $\GCD(n,(k+1)/2) \geq 2$.

We begin by considering the case where $h$ is a rotation, i.e., $h=b^r$ for some $r$.  From Theorem \ref{size.thm}, we can assume without loss of generality that $A'$ contains $(k+1)/2$ rotations; let ${A'_0}$ be the set of rotations in $A'$.  Then ${A'_0}$ is a symmetric subset of $C_n = \langle b \rangle$. We also have that $f_{i,j}({A'_0})h$ is a symmetric subset of $C_n$, because $C' =  f_{i,j}(A')h$ is assumed to be symmetric.  Now, $f_{i,j}({A'_0})h={A'_0}^i b^r$.   Because $(k+1)/2$ is coprime to $n$, Lemma~\ref{lem:var} (restated in multiplicative form) implies $h=e$. 

Let ${B'_1}$ be the set of reflections in $B'$.  From Theorem \ref{size.thm}, we have $|{B'_1}|=(\ell+1)/2$.
Further,  ${B'_1}$ is a strongly symmetric set, so $a{B'_1}$ is a symmetric subset of $C_n$ by Lemma \ref{lem:h1aj0}.  Because $f_{i,j}$ is an automorphism, we have
\begin{align*}
f_{i,j}(a{B'_1})&=f_{i,j}(a)f_{i,j}({B'_1}),\\
&=ab^jf_{i,j}({B'_1}),\\
&=b^{-j}(af_{i,j}({B'_1})).
\end{align*}

As $f_{i,j}({B'_1})$ is strongly symmetric, we have that $af_{i,j}({B'_1})$ is symmetric  by Lemma \ref{lem:h1aj0}.  We observe that $f_{i,j}(a{B'_1})=(a{B'_1})^i$, since $a{B'_1}\subseteq C_n.$  Thus 
\[af_{i,j}({B'_1})=b^jf_{i,j}(a{B'_1})=b^j(a{B'_1})^i,\] and both $af_{i,j}({B'_1})$ and $a{B'_1}$ are symmetric.  Then Lemma~\ref{lem:var} (using multiplicative notation) tells us that $j=0$, because $(\ell+1)/2$ is coprime to $n$.

Now consider the case where $h$ is a reflection, so $h=ab^r$ for some $r$. We have 
\[ C' = f_{i,j}({{A'_0}})ab^r={{A'_0}}^iab^r=a{{A'_0}}^{-i}b^r=a{{A'_0}}^ib^r\] as ${{A'_0}}$ is a symmetric set of rotations.  Now $C' = f_{i,j}({{A'_0}})ab^r$ is strongly symmetric by assumption, so Lemma \ref{lem:h1aj0} implies that $af_{i,j}({{A'_0}})ab^r={{A'_0}}^ib^r$ is symmetric.  We therefore have
\[a C' = {{A'_0}}^ib^r,\] where $aC'$ and $A'_0$ are symmetric,  
so Lemma~\ref{lem:var} tells us that $r=0$, i.e., $h=a$.  

As ${B'_1}$ is a strongly symmetric set of reflections, we know from Lemma \ref{lem:h1aj0} that $a{B'_1}$ is symmetric.  
Furthermore, we have already noted that $af_{i,j}({B'_1})$ is also symmetric.   We have
\begin{align*}
af_{i,j}({B'_1})&=af_{i,j}(a^2{B'_1}),\\
&=af_{i,j}(a)f_{i,j}(a{B'_1}),\\
&=aab^j(a{B'_1})^i,\\
&=b^{j}(a{B'_1})^{i}.
\end{align*}
Once again we apply Lemma~\ref{lem:var} to deduce that $j=0$.
\end{proof}

\begin{theorem}
\label{equiv2.thm}
Suppose $(A^\prime,B^\prime)$ with $|A|=k$ and $|B|=\ell$ and $(C^\prime,D^\prime)$ are equivalent strongly symmetric near-factorizations of $D_{n}$ with $C^\prime=f_{i,j}(A^\prime)h$ and $D^\prime=h^{-1}f_{i,j}(B^\prime)$.  Suppose that $\GCD(n, (k+1)/2) = \GCD(n, (\ell+1)/2) = 1$. 
Let $(A,B)$ and $(C,D)$ be obtained from $(A^\prime,B^\prime)$ and $(C^\prime,D^\prime)$ by applying the inverse P\^{e}cher transform.  Then $(A,B)$ and $(C,D)$ are equivalent symmetric near factorizations of $\zed_{2n}$.
\end{theorem}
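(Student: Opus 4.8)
The plan is first to invoke Lemma~\ref{hj.lem}: under the stated $\GCD$ hypotheses it forces $n$, $k$, $\ell$ to be odd, $j=0$, and $h\in\{e,a\}$. So the equivalence between $(A^\prime,B^\prime)$ and $(C^\prime,D^\prime)$ has the restricted form $C^\prime=f_{i,0}(A^\prime)h$ and $D^\prime=h^{-1}f_{i,0}(B^\prime)$ with $h\in\{e,a\}$. By Theorem~\ref{Pechinb.thm}, both $(A,B)$ and $(C,D)$ are symmetric near-factorizations of $\zed_{2n}$, so the only thing left to prove is that they are equivalent, i.e.\ that $C=wA+t$ and $D=wB-t$ for a single $w\in\zed_{2n}^*$ and $t\in\zed_{2n}$.

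Next I would track how the inverse P\^echer transform $\phi^{-1}\circ\psi^{-1}$ interacts with the three maps that build the equivalence on $D_n$. Since $j=0$, the automorphism $f_{i,0}$ sends $a^eb^m\mapsto a^eb^{im}$, so under $\psi^{-1}$ it becomes $g\colon(e,m)\mapsto(e,im)$, which is an automorphism of $\zed_2\times\zed_n$; this is exactly where $j=0$ is needed, since for $j\neq 0$ the corresponding map on $\zed_2\times\zed_n$ is not even a homomorphism. Left multiplication by $a$ on $D_n$ corresponds under $\psi^{-1}$ to $(e,m)\mapsto(e+1,m)$, and right multiplication by $a$ corresponds to $(e,m)\mapsto(e+1,-m)$. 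Transporting through $\phi$ to $\zed_{2n}$: $g$ becomes multiplication by the element $w_i\in\zed_{2n}^*$ with $w_i\equiv 1\pmod 2$ and $w_i\equiv i\pmod n$ (and the sign-reversed version $(e,m)\mapsto(e,-im)$ becomes multiplication by $-w_i$, using $-1\equiv 1\pmod 2$), while the $\zed_2$-shift $(e,m)\mapsto(e+1,m)$ becomes translation by $n$, which is the Chinese-remainder combination of $(1,0)$ because $n$ is odd.

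Then I would finish by two cases. If $h=e$, applying the inverse transform to $C^\prime=f_{i,0}(A^\prime)$ and $D^\prime=f_{i,0}(B^\prime)$ gives $C=w_iA$ and $D=w_iB$, so $(C,D)$ is equivalent to $(A,B)$ with $w=w_i$ and $t=0$. If $h=a$, then $C^\prime=f_{i,0}(A^\prime)a$ and $D^\prime=a^{-1}f_{i,0}(B^\prime)=af_{i,0}(B^\prime)$; chasing through $\psi^{-1}$ and $\phi^{-1}$ yields $C=(-w_i)A+n$ and $D=w_iB+n$. The linear parts disagree, but the symmetry of $B$ repairs this: $B=-B$ gives $w_iB=-w_iB$, so $D=(-w_i)B+n$, and since $-n\equiv n\pmod{2n}$ we get $D=(-w_i)B-n$. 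Hence $(C,D)=((-w_i)A+n,\,(-w_i)B-n)$, exhibiting equivalence with $w=-w_i\in\zed_{2n}^*$ and $t=n$.

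The only real obstacle is bookkeeping: one must correctly identify left and right multiplication by $a$ as maps on $\zed_2\times\zed_n$ (both flip the $\zed_2$-coordinate, but one negates the $\zed_n$-coordinate and the other does not), and then notice that the apparent mismatch of linear parts in the $h=a$ case disappears precisely because $B$ (and $A$) are symmetric. Everything else is a routine Chinese-remainder computation, and the $\GCD$ hypotheses enter only once, to apply Lemma~\ref{hj.lem}.
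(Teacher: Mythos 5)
Your proposal is correct and follows essentially the same route as the paper: apply Lemma~\ref{hj.lem} to force $j=0$ and $h\in\{e,a\}$, invoke Theorem~\ref{Pechinb.thm} for symmetry, and then chase the two cases through $\psi^{-1}$. The only cosmetic differences are that you transport everything to $\zed_{2n}$ via the Chinese remainder theorem rather than working in $\zed_2\times\zed_n$, and in the $h=a$ case you reconcile the mismatched linear parts using the symmetry of $B$ where the paper uses the (strong) symmetry of $A'$ --- the same trick applied to the other set.
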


\begin{proof}
From Lemma \ref{hj.lem}, $n$ must be odd, $h\in\{e,a\}$ and $j=0$.
By Theorem \ref{Pechinb.thm}, we have that $(A,B)$ and $(C,D)$ are symmetric near-factorizations of $\mathbb{Z}_{2n}$, so it remains to prove they are equivalent. Since $\phi$ is a group isomorphism from $\zed_{2n}$ to $\zed_2 \times \zed_n$, it suffices to prove that 
$(A^*,B^*)$ and $(C^*,D^*)$ are equivalent.

Consider first the case where $h=e$.  Then 
\[C^\prime=f_{i,0}(A^\prime) \quad \text{and} \quad D^\prime=f_{i,0}(B^\prime),\]  
where $f_{i,0} \in \Aut(D_n)$.

Let $(A^\ast,B^\ast)$ denote the image of $(A^\prime,B^\prime)$ under $\psi^{-1}$.  
We observe that $\psi^{-1}(a^eb^h) = (e,h)$.
Further, $f_{i,0}(a^eb^h) = a^eb^{ih}$. Define the mapping $f\colon\mathbb{Z}_2\times\mathbb{Z}_n\rightarrow \mathbb{Z}_2\times\mathbb{Z}_n$ by $f(e,h) = (e,ih)$. Clearly $f$ is an automorphism of $\zed_2 \times \zed_n$. Then we have have the following commutative diagram, which illustrates the fact that $\psi^{-1} \circ f = f_{i,0} \circ \psi^{-1}$:
\begin{center}
\begin{tikzcd}
a^eb^{h} \arrow[r, "\psi^{-1}"] \arrow[d, "f_{i,0}"]
& (e,h) \arrow[d, "f" ] \\
a^eb^{ih} \arrow[r,  "\psi^{-1}" ]
&  (e,ih)
\end{tikzcd}
\end{center}
 Hence, $C^\ast=f(A^\ast)$ and $D^\ast=f(B^\ast)$, as desired.

\medskip

Now consider the case where $h=a$.  
Then 
\[C^\prime=f_{i,0}(A^\prime)a \quad \text{and} \quad D^\prime=af_{i,0}(B^\prime).\] 

Observe that 
\begin{equation}
\label{aleft.eq}
a f_{i,0}(a^eb^h) = a(a^eb^{ih}) = a^{e+1}b^{ih}
\end{equation} and 
\begin{equation}
\label{aright.eq}f_{i,0}(a^eb^h)a = (a^eb^{ih})a =  a^{e+1}b^{-ih}.\end{equation}
Recalling that $A'$ is strongly symmetric, consider a pair of elements $a^eb^h, a^eb^{-h} \in A'$. Under $\psi^{-1}$, this pair is mapped to the pair $(e,h), (e,-h) \in A^*$.
From (\ref{aright.eq}), the corresponding pair of elements in $C'$ is $a^{e+1}b^{-ih}, a^{e+1}b^{ih}$. Applying $\psi^{-1}$, we get the pair 
$(e+1,-ih), (e+1,ih) \in A^*$. Define
$f\colon\mathbb{Z}_2\times\mathbb{Z}_n\rightarrow \mathbb{Z}_2\times\mathbb{Z}_n$ by $f(e,h) = (e+1,ih)$.
Clearly $f$ is an automorphism of $\zed_2 \times \zed_n$ and
$f$ maps the pair $\{(e,h), (e,-h)\}$ to $\{(e+1,ih), (e+1,-ih)\}$.
It follows that $C^* = f(A^*)$. 

Now we look at $B^*$ and $D^*$, starting from $B'$ and $D'$.
Consider an element $a^eb^h \in B'$. Under $\psi^{-1}$, this element is mapped to $(e,h) \in B^*$.
From (\ref{aleft.eq}), the corresponding  element in $D'$ is $a^{e+1}b^{ih}$. Applying $\psi^{-1}$, we get the element 
$(e+1,ih) \in D^*$. Define  $f$ as before, i.e., $f(e,h) = (e+1,ih)$. Then 
it follows immediately that $D^* = f(B^*)$. 

In summary, $C^\ast=f(A^\ast)$ and $D^\ast=f(B^\ast)$, as desired.
\end{proof}

\begin{corollary}
\label{kk.cor}
Suppose $k$ is odd and $n = (k^2 + 1)/2$. Suppose that $(A^\prime,B^\prime)$  and $(C^\prime,D^\prime)$ are equivalent strongly symmetric $(k,k)$-near-factorizations of $D_{n}$ with $C^\prime=f_{i,j}(A^\prime)h$ and $D^\prime=h^{-1}f_{i,j}(B^\prime)$.  
Let $(A,B)$ and $(C,D)$ be obtained from $(A^\prime,B^\prime)$ and $(C^\prime,D^\prime)$ by applying the inverse P\^{e}cher transform.  Then $(A,B)$ and $(C,D)$ are equivalent symmetric near factorizations of $\zed_{2n}$.
%
%
\end{corollary}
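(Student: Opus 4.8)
The plan is to derive Corollary~\ref{kk.cor} directly from Theorem~\ref{equiv2.thm} by checking that the hypotheses of that theorem are met in the special case at hand. Since we are given $(k,k)$-near-factorizations, we have $|A'|=|B'|=k$, so in the notation of Theorem~\ref{equiv2.thm} we take $k=\ell=k$. The only nontrivial thing to verify is the numerical condition $\GCD(n,(k+1)/2)=\GCD(n,(\ell+1)/2)=1$, which here amounts to the single condition $\GCD(n,(k+1)/2)=1$ with $n=(k^2+1)/2$.

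So the key computation is to show $\GCD\!\big((k^2+1)/2,\,(k+1)/2\big)=1$ when $k$ is odd. First I would note that $k$ odd makes both $(k^2+1)/2$ and $(k+1)/2$ integers. Then I would work with the numerators: any common divisor of $(k^2+1)/2$ and $(k+1)/2$ divides $k^2+1$ and $k+1$, hence divides $k(k+1)-(k^2+1)=k-1$, and therefore divides $(k+1)-(k-1)=2$. Thus the $\GCD$ of the two integers is $1$ or $2$. To rule out $2$, observe that $(k+1)/2$ and $(k^2+1)/2$ cannot both be even: if $(k+1)/2$ is even then $k\equiv 3\pmod 4$, so $k^2\equiv 1\pmod 8$... more simply, $k^2+1\equiv 2\pmod 8$ when $k$ is odd (since $k^2\equiv 1\pmod 8$), so $(k^2+1)/2$ is always odd. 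Hence the $\GCD$ is exactly $1$, and the numerical hypothesis of Theorem~\ref{equiv2.thm} holds.

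With the numerical condition in hand, the corollary is immediate: Theorem~\ref{equiv2.thm} applies to the equivalent strongly symmetric near-factorizations $(A',B')$ and $(C',D')$ of $D_n$, and yields that their images $(A,B)$ and $(C,D)$ under the inverse P\^echer transform are equivalent symmetric near-factorizations of $\zed_{2n}$. There is no real obstacle here; the entire content is the small number-theoretic fact that $(k^2+1)/2$ is odd and coprime to $(k+1)/2$ for odd $k$, which I have just outlined. I would write the proof as: verify $k$ odd implies $(k^2+1)/2$ is odd; verify $\GCD((k^2+1)/2,(k+1)/2)=1$; then cite Theorem~\ref{equiv2.thm}.

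\begin{proof}
Since $k$ is odd, $(k+1)/2$ and $(k^2+1)/2$ are both integers, and $|A'| = |B'| = k$. We claim that $\GCD(n,(k+1)/2) = 1$, where $n = (k^2+1)/2$; this will establish the numerical hypothesis $\GCD(n,(k+1)/2) = \GCD(n,(\ell+1)/2) = 1$ of Theorem \ref{equiv2.thm} (with $\ell = k$).

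Let $g = \GCD((k^2+1)/2, (k+1)/2)$. Then $g$ divides $k^2 + 1$ and $k+1$, so $g$ divides $k(k+1) - (k^2+1) = k - 1$, and hence $g$ divides $(k+1) - (k-1) = 2$. Thus $g \in \{1,2\}$. On the other hand, because $k$ is odd, $k^2 \equiv 1 \pmod 8$, so $k^2 + 1 \equiv 2 \pmod 8$ and therefore $(k^2+1)/2$ is odd. Consequently $g$ cannot equal $2$, so $g = 1$, i.e., $\GCD(n, (k+1)/2) = 1$.

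The hypotheses of Theorem \ref{equiv2.thm} are therefore satisfied, and the conclusion follows: $(A,B)$ and $(C,D)$ are equivalent symmetric near-factorizations of $\zed_{2n}$.
\end{proof}
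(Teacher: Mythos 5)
Your proof is correct and follows the same route as the paper: verify the numerical hypothesis $\GCD\big((k^2+1)/2,(k+1)/2\big)=1$ and then invoke Theorem \ref{equiv2.thm}. The only difference is that you spell out the elementary GCD computation (which the paper simply asserts), and your argument for it is sound; one could shorten it by noting $(k^2+1)/2=(k-1)\cdot\frac{k+1}{2}+1$.
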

\begin{proof}
This follows immediately from Theorem \ref{equiv2.thm} because 
$\GCD((k+1)/2, (k^2 + 1)/2)= 1$.
\end{proof}

\section{Nonequivalent near-factorizations in nonabelian groups}
\label{nonabelian.sec}

\subsection{$(a,a)$-near-factorizations in $D_{(a^2+1)/2}$}
\label{D41.sec}

In \cite{KPS}, $(a,a)$-near-factorizations in $\zed_{a^2+1}$ were enumerated for $a \leq 14$. It was shown in that paper that there are two $(9,9)$-near-factorizations in $\zed_{82}$, up to equivalence (there are no other   nonequivalent   $(a,a)$-near-factorizations in $\zed_{a^2+1}$  for $a$ odd, $a < 14$).
It is also known from Theorem \ref{translation.thm} that a near-factorization $(A,B)$ in a cyclic group has a ``translation'' $(A+g,B-g)$  that is symmetric. Hence there are two symmetric $(9,9)$-near-factorizations in $\zed_{82}$ that are nonequivalent. 
The two resulting strongly symmetric near-factorizations of $D_{41}$ obtained from the P\^{e}cher transform are nonequivalent, by Theorem \ref{equiv2.thm}. These two near-factorizations, which we denote by $(A_1,B_1)$ and $(A_2,B_2)$, are as follows:

\begin{align*}
A_1&=\{e,b^2,b^4,b^{37},b^{39},ab,ab^3,ab^{38},ab^{40}\}\\
B_1&=\{b^9,b^{14},b^{27},b^{32},a,ab^5,ab^{18},ab^{23},ab^{36}\} \\[8pt]
A_2&=\{e,b^8,b^{10},b^{31},b^{33},ab,ab^9,ab^{32},ab^{40}\}\\
B_2&=\{b^3,b^{14},b^{27},b^{38},a,ab^{11},ab^{17},ab^{24},ab^{30}\}.
\end{align*}

More generally, suppose that $a$ is any composite odd integer, say $a = jk$, where $j > 1$ and $k > 1$. Let $2n = a^2 + 1$; then $n$ is odd.  We can construct two SEDFs in $\zed_{2n}$ from the following two blowup sequences: $(a, a)$ and $(j,j,k,k)$
(blowup sequences are described in \cite{KPS,PS24}). 
The blowup sequence  $(a, a)$ yields the following SEDF:
\begin{align*} A_1 &= \{0, 1, \dots , a-1\} \\
B_1 &= \{a, 2a, \dots , a^2\}. 
\end{align*}
The blowup sequence  $(j,j,k,k)$ yields the following SEDF:
\begin{align*} A_2 &= \{ik^2 + h : 0 \leq i \leq j-1, 0 \leq h \leq k-1\} \\
B_2 &= \{(j-1+ji)k^2 + hk : 0 \leq i \leq j-1, 1 \leq h \leq k\}. 
\end{align*}

We show that these two SEDFs are non-equivalent as follows. The blowup sequence $(a, a)$ yields the SEDF $(A_1,B_1)$, where $A_1$ and $B_1$ both contain arithmetic progressions of length $a$. Any SEDF equivalent  to $(A_1,B_1)$ also has this property (provided that we let the arithmetic sequence ``wrap around''). The same property holds for the near-factorizations derived from this SEDF. On the other hand, it is easy to see that the SEDFs (and near-factorizations) derived from the  blowup sequence  $(j,j,k,k)$ do not contain arithmetic sequences of length $a$.

We can transform these two nonequivalent near-factorizations into nonequivalent symmetric near-factorizations of $\zed_{2n}$ using Theorem \ref{translation.thm}. Then we obtain two strongly symmetric near-factorizations of $D_n$ using the Pecher transform.  These near-factorizations are nonequivalent due to Corollary \ref{kk.cor}. Thus we have proven the following.

\begin{theorem}
Suppose $a$ is any composite odd integer, say $a = jk$, where $j > 1$ and $k > 1$.
Let $n = (a^2+1)/2$. Then there are nonequivalent near-factorizations of $D_n$.
\end{theorem}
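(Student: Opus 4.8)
The plan is to manufacture two inequivalent near-factorizations in the cyclic group $\zed_{2n}$, symmetrize each by a translation, transport both to $D_{n}$ via the P\^echer transform, and then invoke Corollary~\ref{kk.cor} to lift inequivalence back down. First I would dispatch the numerical hypotheses. Writing $a=jk$ with $j,k$ odd and $j,k\ge 3$, one has $a^{2}\equiv 1\pmod 4$, so $2n=a^{2}+1\equiv 2\pmod 4$ and hence $n=(a^{2}+1)/2$ is odd, which is what the P\^echer transform needs. Moreover $(a^{2}+1)/2=\tfrac{a+1}{2}(a-1)+1$, so $\GCD\big(\tfrac{a+1}{2},\tfrac{a^{2}+1}{2}\big)=1$; this is exactly the hypothesis of Corollary~\ref{kk.cor}, with its ``$k$'' taken to be $a$ (since we will work with $(a,a)$-near-factorizations).

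Next I would invoke the blowup machinery of \cite{KPS,PS24}: the blowup sequences $(a,a)$ and $(j,j,k,k)$ give two-set SEDFs in $\zed_{2n}$, hence (Lemma~\ref{GSEDF-NF}) two $(a,a)$-near-factorizations $(A_{1},B_{1})$ and $(A_{2},B_{2})$ of $\zed_{2n}$, with $A_{1}=\{0,1,\dots,a-1\}$, $B_{1}=\{a,2a,\dots,a^{2}\}$, and $A_{2},B_{2}$ the ``two-dimensional'' sets displayed above. The heart of the argument is that $(A_{1},B_{1})\not\sim(A_{2},B_{2})$. Since any equivalence of near-factorizations of $\zed_{2n}$ acts by $x\mapsto rx+h$ with $r\in\zed_{2n}^{*}$, and such a map carries a length-$a$ arithmetic progression bijectively onto a length-$a$ arithmetic progression, it is enough to check that $A_{1}$ is an arithmetic progression of length $a$ while $A_{2}$ (which also has exactly $a$ elements) is not. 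To pin down the latter I would count difference sets: all elements of $A_{2}$ lie below $2n$ without wraparound (the largest is $(j-1)k^{2}+(k-1)<jk^{2}\le n$), so $A_{2}$ is $j$ blocks of $k$ consecutive integers with gaps of size $k^{2}-k>2(k-1)$, whence $A_{2}-A_{2}$ has exactly $(2j-1)(2k-1)$ elements, while a length-$a$ progression $S$ has $|S-S|=2a-1=2jk-1$ and $(2j-1)(2k-1)-(2jk-1)=2(j-1)(k-1)>0$. Since $x\mapsto rx+h$ preserves $|S-S|$, $A_{2}$ is not an affine image of $A_{1}$, so the two near-factorizations are inequivalent.

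Finally I would assemble the pieces. By Theorem~\ref{translation.thm} each of $(A_{1},B_{1})$, $(A_{2},B_{2})$ has a symmetric translate; translating is an equivalence operation, so these symmetric near-factorizations of $\zed_{2n}$ remain inequivalent. Applying the P\^echer transform (Theorem~\ref{Pech.thm}) produces strongly symmetric $(a,a)$-near-factorizations $(A_{1}',B_{1}')$ and $(A_{2}',B_{2}')$ of $D_{n}$. Were these equivalent, Corollary~\ref{kk.cor}---whose hypothesis we checked above---would force their inverse P\^echer transforms to be equivalent symmetric near-factorizations of $\zed_{2n}$; but the inverse transform undoes the forward transform, so those are precisely the symmetric translates we started with, which are inequivalent. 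This contradiction shows $(A_{1}',B_{1}')\not\sim(A_{2}',B_{2}')$, completing the proof. I expect the one genuine obstacle to be the inequivalence verification---in particular making ``$A_{2}$ is not a progression'' rigorous via the difference-set count; two small points to watch are that equivalence of near-factorizations does not permit interchanging the two components, so comparing the $A$-parts really does suffice, and that inequivalence survives the translation of Theorem~\ref{translation.thm} precisely because that translation is itself an equivalence.
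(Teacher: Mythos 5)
Your proposal follows essentially the same route as the paper: the two blowup sequences $(a,a)$ and $(j,j,k,k)$ in $\zed_{2n}$, nonequivalence detected via the arithmetic-progression structure of $A_1$, symmetrization by Theorem~\ref{translation.thm}, the P\^{e}cher transform, and Corollary~\ref{kk.cor} (whose GCD hypothesis you verify exactly as needed) to pull inequivalence back from $D_n$. The only difference is that you make the paper's informal ``contains no arithmetic progression of length $a$'' claim rigorous by comparing $|A_1-A_1|=2jk-1$ with $|A_2-A_2|=(2j-1)(2k-1)$, which is a correct and slightly cleaner way to finish that step.
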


\subsection{$(9,21)$-near-factorizations in $D_{95}$}
\label{D95.sec}

We have shown by computer that there are two (symmetric) $(9,21)$-near-factorizations in $\zed_{190}$, up to equivalence. These two near-factorizations, which we denote by $(A_1,B_1)$ and $(A_2,B_2)$, are obtained  by setting
$A_i=A_i' \cup (-A_i')$  and $B_i=B_i' \cup(-B_i')$, $i = 1,2$, where

\begin{align*}
 A_1'&=\{0, 1, 2, 3, 4\}\\
 B_1'&=\{5, 14, 23, 32, 41, 50, 59, 68, 77, 86, 95\}\\[8pt]
 A_2'&=\{0, 1, 8, 9, 10\}\\   
 B_2'&=\{11, 14, 17, 38, 41, 44, 65, 68, 71, 92, 95\}.
 \end{align*}

We obtain two
strongly symmetric near-factorizations of $D_{95}$  from the P\^{e}cher transform.
Theorem \ref{equiv2.thm} does not establish that these strongly symmetric near-factorizations of $D_{95}$ are nonequivalent, because the required $\GCD$ conditions are not satisfied: 
\[\GCD \left( \frac{9+1}{2}, 95 \right) = 5 > 1.\]
However, we tested the two resulting near-factorizations of $D_{95}$ using the computer and we found that they are nonequivalent.

\subsection{$(7,7)$-near-factorizations in $D_5 \times \zed_{5}$}

At the present time, there are almost no known examples of near-factorizations in nonabelian groups other than the dihedral groups. P\^{e}cher \cite{Pech} presented a $(7,7)$-near-factorization of $D_5 \times \zed_{5}$. We have enumerated all nonequivalent near-factorizations of this type and find that there are exactly two (of course, one of these is the near-factorization previously given in \cite{Pech}). 

In order to test equivalence, we need to determine
$\Aut(D_5 \times \zed_{5})$. It might not be immediately obvious, but it is not difficult to prove that
$\Aut(D_5 \times \zed_{5}) \cong \Aut(D_5) \times \Aut(\zed_{5})$. This can be shown as follows.
For convenience, we write the cyclic group of order five multiplicatively and denote it by $C_5 = \{1,c,c^2,c^3,c^4\}$.

$D_5 \times C_{5}$ has the presentation
\[
D_5 \times C_{5} = 
\left\langle
a,b,c : a^2=b^5= abab = c^5 = e, ac = ca, bc = cb
\right\rangle.
\]
Suppose $f \in \Aut(D_5 \times C_{5})$. Obviously $f$ is completely determined by
$x = f(a)$, $y=f(b)$ and $z = f(c)$. 
Because $Z(D_5 \times C_{5}) = \{e\} \times C_5$, it must be the case that  $z = c^k$, where $1 \leq k \leq 4$.
Because the only elements of order  two in $D_5 \times C_{5}$ are $(ab^j,1)$ (for $0 \leq j \leq 4$), 
it must be the case that $y = ab^j$ (where $0 \leq j \leq 4$). Finally, $x$ must have order five in $D_5 \times C_{5}$. Therefore $x = b^ic^k$, where  $i$ and $k$ are not both equal to $0$. Since $abab=e$, we must also have
$f(a)f(b)f(a)f(b) = xyxy = e$.
Hence,
\begin{align*}
e &= xyxy \\
&= b^ic^kab^jb^ic^kab^j\\
&= c^{2k} (b^i a) (b^{j+i}) (a b^j)\\
&= c^{2k} (ab^{-i})  b^{j+i} (a b^j)\\
&= c^{2k} (ab^{j}) (a b^j)\\
&= c^{2k}.
\end{align*}
Hence $k = 0$ and it follows that $f(b) = b^i$. Therefore $\Aut(D_5 \times \zed_{5}) \cong \Aut(D_5) \times \Aut(\zed_{5})$.

We have already discussed $\Aut(D_n)$ in Section \ref{notation.sec} and we have also observed that $\Aut(\zed_{n})$ just consists of  all mappings
$x \mapsto rx$, where $r \in \zed_{n}^*$.
Therefore an automorphism of $C_5$ is a mapping of the form 
$x \mapsto x^r$ where $r \in \{1,2,3,4\}$.

Our computation finds the following two nonequivalent near-factorizations of
$D_5 \times C_5$:
\begin{align*}
A_1&=\big\{(e,e),(e,c),(b,c^3),(b^{2},c^3),(a,e),(a,c),(ab,c^3)\big\}\\
B_1&=\big\{(b,c^2),(b^{4},c),(b^{4},c^3),(a,c^2),(ab^{2},c^2),(ab^{3},c),(ab^{3},c^3)\big\}\\
\intertext{and}
A_2&=\big\{(e,e),(e,c),(b,c^3),(b^{2},c^3),(a,e),(a,c),(ab^{4},c^3)\big\}\\
B_2&=\big\{(b,c^2),(b^{4},c),(b^{4},c^3),(a,c^2),(ab^{2},c),(ab^{2},c^3),(ab^{3},c^2)\big\}.
\end{align*}
P\^{e}cher \cite{Pech} provides this near-factorization of $D_5 \times C_5$:
\begin{align*}
A_{\text{P\^{e}cher}} &=\big\{
(e,e),
(a,e),
(e,c^3),
(a,c^3),
(ab,c^4),
(ab,c^4),
(b^2,c^4)
\big\}\\
B_{\text{P\^{e}cher}} &=\big\{
(a,c),
(b,c),
(ab^2,c),
(ab^3,c^3),
(b^4,c^3),
(ab^3,c^4),
(r^4,c^4)
\big\}.
\end{align*}

The mapping $\Phi_{g,e}$, where $g=(f_{0,0},x\mapsto x^2)$ is such that
\[
\Phi_{g,e}\big((A_{\text{P\^{e}cher}},B_{\text{P\^{e}cher}})\big) = (A_1,B_1).
\]
This shows that our solution $(A_1,B_1)$ and P\^{e}cher's solution are equivalent. 


\subsection{$(7,7)$-near-factorizations in $C_5^2\rtimes_{2}C_2$}

Let $G = C_5^2\rtimes_{2}C_2$ be the non-abelian group of order 50 having 
the following generators and relations presentation:
\[
G = \langle a,b,c | a^5=b^5=c^2=e, aba=b^{-1}, aca=c^{-1}, bc=cb \rangle
\]
The automorphism group $\Aut(G)$ has generators $x,y,z,w$, given by
\[
\begin{array}{@{}lll@{}}
w:a\mapsto a&
w:b\mapsto b^4c^4&
w:c\mapsto b\\
\\
x:a\mapsto a&
x:b\mapsto b^2&
x:c\mapsto c\\
\\
y:a\mapsto b&
y:b\mapsto b&
y:c\mapsto c\\
\\
z:a\mapsto ac&
z:b\mapsto b&
z:c\mapsto c.\\
\end{array}
\]
Our computation finds that there are two nonequivalent near-factorizations of
$G$:
\begin{align*}
A_1&=\big\{e,c,b,b^2c^2,a,ac,ab\big\}\\
B_1&=\big\{bc^4,b^4c,b^4c^4,ac^3,ab^2c^2,ab^3,ab^3c^3\big\}\\
\intertext{and}
A_2&=\big\{e,c,b,b^2c^2,a,ac,ab^4c\big\}\\
B_2&=\big\{bc^4,b^4c,b^4c^4,ac^3,ab^2c,ab^2c^3,ab^3c^4\big\}.
\end{align*}
P\^{e}cher provides this near factorization of $G$:
\begin{align*}
A_{\text{P\^{e}cher}} &=\big\{
e,a,b,c,ab^4,ac^4,b^2c^2
\big\}\\
B_{\text{P\^{e}cher}} &=\big\{
ab^2,ac^2,ab^3c^3,b^4c,bc^4,ab^2c^2,b^4c^4
\big\}.
\end{align*}

Observe that
\[
\Phi_{z,e}\big((A_{\text{P\^{e}cher}},B_{\text{P\^{e}cher}})\big) = (A_2,B_2),
\]
showing that our solution 2 and P\^{e}cher's solution are equivalent. 


\section{Summary and conclusion}

We showed that some known constructions of near-factorizations in dihedral groups yield equivalent near-factorizations. We also found some new examples and infinite classes of nonequivalent near-factorizations in nonabelian groups. Finally, we provided a detailed analysis of a construction for near-factorizations in dihedral groups from near-factorizations in cyclic groups, due to P\^{e}cher \cite{Pech}. One interesting open question is if Lemma \ref{hj.lem} and Theorem \ref{equiv2.thm} are true if the stated GCD conditions are not satisfied. 

We have focussed on near-factorizations in nonabelian groups in this paper. We mention that we have investigated 
near-factorizations of noncyclic abelian groups in another recent paper (\cite{KMS}). At the present time, there is no known example of a nontrivial near-factorization of any noncyclic abelian group.

\section*{Acknowledgements}
We thank Michael Epstein, Sophie Huczynska, and Shuxing Li for helpful discussions. 


\begin{thebibliography}{10}
\bibitem{BHS}
G.~Bacs\'{o}, L.cH\'{e}thelyi and P.~Sziklai,  New near-factorizations of
  finite groups,  \textsl{Studia Sci. Math. Hungar.} \textbf{45} (2008),
  493--510.
  
  \bibitem{BJWZ}
J. Bao, L. Ji, R. Wei and Y. Zhang.
New existence and nonexistence results for strong external difference families
\emph{Discrete Mathematics} {\bf 341} (2018), 1798--1805.
  
  \bibitem{deB}
  N.G. de Bruijn.
  On number systems.
 \emph{ Nieuw Archief voor Wiskunde} (3) IV, (1956), 15--17.
  
  
\bibitem{CGHK} D. de Caen, D.A. Gregory, I.G. Hughes and D.L. Kreher.
Near-factors of finite groups.
\emph{Ars Combin.} {\bf 29} (1990), 53--63.

\bibitem{HJN} 
S. Huczynska, C.  Jefferson and S. Nep\v{s}insk\'{a}.
Strong external difference families in abelian
and non-abelian groups.
\emph{Cryptography and Communications} {\bf 13} (2021), 331--341.

\bibitem{HP} 
S. Huczynska and M.B. Paterson.
Existence and non-existence results for strong external
difference families.
\emph{Discrete Mathematics} {\bf 341} (2018), 87--95.

\bibitem{JeLi}
J. Jedwab and S. Li.
Construction and nonexistence of strong external difference families. 
\emph{J. Algebr. Comb.} {\bf 49} (2019), 21--48.

\bibitem{KMS}
D.L.\ Kreher, W.J. Martin and D.R. Stinson.
Uniqueness and explicit computation of mates in near-factorizations.
Preprint.

\bibitem{KPS}
D.L.\ Kreher, M.B. Paterson and D.R. Stinson.
Strong external difference families and classification of $\alpha$-valuations.
Preprint. \url{https://arxiv.org/abs/2406.09075}

\bibitem{LNC}
X. Lu, X. Niu, and H. Cao. 
Some results on generalized strong external difference families. 
\emph{Des. Codes Cryptogr.} {\bf 86} (2018),  2857--2868. 

\bibitem{PS16}
M.B. Paterson and D.R. Stinson.
Combinatorial characterizations of algebraic manipulation
detection codes involving generalized difference families.
\emph{Discrete Math.} \textbf{339} (2016),  2891--2906.

\bibitem{PS24}
M.B. Paterson and D.R. Stinson.
Circular external difference families, graceful labellings and
cyclotomy.
\emph{Discrete Math.} \textbf{347} (2024), \#114103, 15 pp.

\bibitem{Pech}
A. P\^{e}cher. Cayley partitionable graphs and near-factorizations of finite groups.
\emph{Discr. Math.} {\bf 276} (2004) 295--311.


\bibitem{WYFF}
J. Wen, M. Yang, F. Fu, and K. Feng. 
Cyclotomic construction of strong external difference families in finite fields. 
\emph{Des. Codes Cryptogr.} {\bf 86}  (2018), 1149–1159.

\end{thebibliography}
\end{document}